    \newtheorem{thm}{Theorem}[section]
    \newtheorem{Prop}[thm]{Proposition}
    \newtheorem*{Proof*}{Proof}
   \newtheorem{Lemma}[thm]{Lemma}
    \newtheorem{Thm}[thm]{Theorem}
    \newtheorem{subsec}[thm]{}
\theoremstyle{Definition}
    \newtheorem{Def}[thm]{Definition}
        \newtheorem{Rem}[thm]{Remark}
    \newtheorem{Exam}[thm]{Example}
\newtheorem{Coro}[thm]{Corollary}
\theoremstyle{remark}
\tikzset{
  curve/.style={
    settings={#1},
    to path={
      (\tikztostart)
      .. controls ($(\tikztostart)!\pv{pos}!(\tikztotarget)!\pv{height}!270:(\tikztotarget)$)
      and ($(\tikztostart)!1-\pv{pos}!(\tikztotarget)!\pv{height}!270:(\tikztotarget)$)
      .. (\tikztotarget)\tikztonodes
    },
  },
  settings/.code={%
    \tikzset{quiver/.cd,#1}%
    \def\pv##1{\pgfkeysvalueof{/tikz/quiver/##1}}%
  },
  quiver/.cd,
  pos/.initial=0.35,
  height/.initial=0,
}
\date{}
\begin{document}
\renewcommand{\baselinestretch}{1.2}
\renewcommand{\arraystretch}{1.0}
\title{\bf On Affine Version of Hom-Lie Algebras}
\date{}
\author{{\bf Tarik Anowar$^{1}$,~Ripan Saha$^{2}$\footnote
        { Corresponding author:~~Email: ripanjumaths@gmail.com}}\\
{\small 1.  Department of Mathematics, Raiganj University, Raiganj 733134, West Bengal, India}\\
{\small 3. Department of Mathematics, Raiganj University, Raiganj 733134, West Bengal, India}
}
 \maketitle
\begin{center}
\begin{minipage}{12.cm}
\begin{center}{\bf ABSTRACT}\end{center}

This paper introduces Hom-type analogues of affine algebraic structures, termed Hom-affgebras. 
Extending Brzezi\'nski's theory of affgebras and the Hom-algebra framework developed by Hartwig--Larsson--Silvestrov, we define and study Hom-associative, Hom-pre-Lie, and Hom-Lie affgebras, 
where the classical identities are twisted by an affine self-map. We show how Hom-associative, Hom-pre-Lie, and Hom-Lie affgebras are related to one another. The main focus of this paper is on Hom-Lie affgebras and their fibers. We study the concept of generalized derivations for Hom-Lie algebras, extending the notion of generalized derivations for Lie algebras. We explore the close relationship between Hom-Lie affgebras and such derivations. We show that every Hom-Lie affgebra both determines and is determined by a Hom-Lie algebra together with such a generalized derivation and a constant. Furthermore, we establish that a homomorphism between Lie affgebras corresponds to a homomorphism between their associated Lie fibers along with a constant, and vice versa.

\medskip

{\bf Key words}: Affgebra, Hom-Lie affgebra, Hom-associative affgebra, Affine module.
\medskip

 {\bf Mathematics Subject Classification (2020):} 17B61, 17B40, 14R10, 14R99.
\end{minipage}
\end{center}
\normalsize\vskip0.5cm

\section{Introduction}
The notion of \emph{affgebra} was introduced as a natural extension of affine geometry into the realm of algebraic structures, replacing linear dependence on a base point with affine and heap-theoretic operations. The origin of this idea can be traced back to the study of \emph{heaps} (or \emph{torsors})~\cite{Pru1, Bae1, Brz22}, structures with a ternary operation satisfying the Mal’tsev and associativity identities, which capture the essence of groups but without a fixed identity element. In recent years, Brzeziński and his collaborators \cite{Brz1, Brz2, Brz3, Brz222, And1, And2} have redefined a systematic theory of affine spaces using heaps, and developed the theory of affgebras as algebraic structures defined on affine spaces instead of vector spaces, providing an intrinsic, basepoint-free approach to algebraic geometry. Historically, the study of Lie affgebras started in \cite{Gra1}, exploring vector-space-valued Lie brackets on affine spaces. This theory was later developed for the differential geometry of affine-fibre bundles (\emph{AV-geometry})\cite{Gra4, Gra1} and for a frame-independent approach to Lagrangian mechanics~\cite{Gra3, Tul1}. Reference~\cite{Brz1} then proposed a new, more general framework for Lie affgebras that does not rely on or assume an underlying vector space. This framework revealed elegant analogues of associative, Lie, and pre-Lie algebras within the affine category, highlighting the deep interplay between algebraic operations and affine geometry. In \cite{And2}, the authors studied Lie affgebras and their relationship with the Lie algebras.

In recent years, \emph{Hom-type algebras} emerged as generalizations of classical algebraic structures. Initiated by Hartwig, Larsson, and Silvestrov \cite{HLS} by introducing Hom-Lie algebras in the context of deformations of the Witt and Virasoro algebras, Hom-algebras are defined by twisting their defining identities via a linear self-map. This twisting mechanism, replacing strict associativity or Jacobi conditions with Hom-associativity and Hom-Jacobi conditions, allows flexible deformation theories and connections with quantum groups and $\sigma$-derivations. The resulting Hom-associative \cite{MS10,CS} and Hom-Lie algebras\cite{HLS, MS10} have since inspired extensive research on Hom-pre-Lie \cite{MS, GS23}, Hom-Leibniz \cite{MS08,MS, saha20, saha21}, and other generalized algebraic structures. Recently, we have studied hom version of heaps and trusses in \cite{AST}.

The main goal of this paper is to unify these two directions of research—namely, the affinization of algebraic structures and Hom-type algebras—by developing the theory of \emph{Hom-affgebras}. Specifically, we introduce and study \emph{Hom-associative affgebras} and \emph{Hom-Lie affgebras} as Hom-type analogues of the associative and Lie affgebras of Brzezi\'nski and his collaborators. This approach provides a unified framework connecting affine and Hom-type algebraic theories and reveals how Hom-affine structures naturally retract to corresponding Hom-algebras on tangent spaces at chosen base points and vice versa.

An additional focus of this work is the development of \emph{generalized derivations} for Hom-Lie algebras. The notion of generalized derivations was originally introduced in the context of Lie algebras by Leger and Luks \cite{Leg1} as a way to extend the concept of derivations to include centroids and quasi-centroids, thereby encompassing a wider class of linear endomorphisms satisfying generalized Leibniz-type relations. In the present work, we construct the Hom-analogues of these generalized derivations, where the compatibility conditions are twisted by the Hom-map. This generalization allows for the investigation of Hom-Lie affgebra structures arising from generalized derivations for Hom-Lie algebras, extending the work of Andruszkiewicz, Brzezi\'nski, and Radziszewski in the affine framework \cite{And2} to the Hom-setting. 

The paper is organized as follows: Section 2 recalls the fundamental notions of heaps, affine modules, affine morphisms, affgebras, and Lie affgebras providing examples and the connection with classical affine spaces and vector spaces. In Section 3, we define Hom-associative affgebra, Hom-Lie affgebra and some interesting results. Section 4 explores the interrelationships between various Hom-affgebraic structures, showing how Hom-associative affgebras yield Hom-Lie and Hom-pre-Lie affgebras, thus extending classical correspondences known in non-Hom settings. In Section 5, we introduce and investigate the concept of \emph{generalized derivations} within the framework of Hom--Lie algebras. Building upon this foundation, we develop a systematic correspondence between Hom--Lie affgebras and such derivations. In particular, we demonstrate that every Hom--Lie affgebra can be characterized by a Hom--Lie algebra endowed with a Hom-version of generalized derivation and an additional constant element, and conversely, each such triple gives rise to a Hom--Lie affgebra. Moreover, we establish a structural correspondence between morphisms of Hom--Lie affgebras and morphisms of their fiber Hom--Lie algebras: a homomorphism between Hom--Lie affgebras is equivalent to a homomorphism between the associated Hom--Lie fibers together with a compatible constant and satisfying some compatibility conditions, and the converse also holds.

This study thus lays the foundation for a comprehensive theory of Hom-affgebras and their derivations, bridging affine and Hom-type algebraic geometry. It opens up further avenues toward Hom-analogues of Leibniz algebras, dendriform algebras, and other types of non-associative algebras, as well as their cohomological structures, within the affine and heap-theoretic context.

\section{Preliminaries}

In this section, we recall the fundamental notions of heaps, affine spaces and affgebras which will be used throughout the paper. For further details, we refer the reader to \cite{Bre1, Brz1, Brz1, Brz2, Brz3, Brz22, Brz222, And1, And2, Jac1}. Throughout the paper, $\mathbb{K}$ denotes a field of characteristic not equal to 2, unless stated otherwise.

\begin{Def}\cite{Pru1,Brz22,Brz222}
A \emph{heap} is a set $\mathcal{A}$ equipped with a ternary operation 
\[
\langle -,-,- \rangle : \mathcal{A} \times \mathcal{A} \times \mathcal{A} \longrightarrow \mathcal{A}, \quad (a,b,c) \mapsto \langle a,b,c \rangle,
\]
satisfying, for all $a,b,c,d,e \in \mathcal{A}$:
\begin{enumerate}
\item $\langle \langle a,b,c \rangle, d, e \rangle = \langle a,b, \langle c,d,e \rangle \rangle$ \quad (associativity),
\item $\langle a,a,b \rangle = b = \langle b,a,a \rangle$ \quad (Mal'tcev identity).
\end{enumerate}
\end{Def}
A heap is called \emph{abelian} if $\langle a,b,c\rangle = \langle c,b,a\rangle$ for all $a,b,c \in \mathcal{A}$.
\begin{Rem}\label{re1}
Let $(\mathcal{A},\bullet)$ be an abelian group. Then $\mathcal{A}$ becomes an abelian heap with the ternary operation 
\[
\langle a,b,c\rangle = a\bullet b^{-1}\bullet c, \quad \text{for all } a,b,c \in \mathcal{A}.
\]
On other hand, for any non-empty heap $\mathcal{A}$ and an element $o \in \mathcal{A}$, the set $\mathcal{A}$ endowed with the binary operation  
\[
a +_{o} b = \langle a, o, b \rangle
\]  
is a group (abelian if $\mathcal{A}$ is abelian), known as the \emph{retract} of $\mathcal{A}$ at $o$, denoted by $G(\mathcal{A},o)$.  
Moreover, the inverse of $a \in \mathcal{A}$ in $G(\mathcal{A},o)$ is given by  
\[
a^{-1} = \langle o, a, o \rangle.
\]
\end{Rem}

A heap morphism from $(\mathcal{A},\langle -,-,- \rangle_{1})$ to $(\mathcal{B},\langle -,-,- \rangle_{2})$ is a map $f:\mathcal{A}\to \mathcal{B}$ such that $f(\langle a,b,c\rangle_{1})=\langle f(a),f(b),f(c)\rangle_{2}$ for all $a,b,c\in \mathcal{A}$.
The axioms of an abelian heap imply that the placement of brackets 
between any odd number of elements of $\mathcal{A}$ does not affect the value of the expression. 
Hence, for any odd number of elements we write
\[
  \langle a_1,a_2,\ldots,a_{2n+1}\rangle \in \mathcal{A}
\]
to denote any possible repeated application of the ternary operation $\langle-,-,-\rangle$ 
to the tuple $(a_1,a_2,\ldots,a_{2n+1})$.

\medskip

Moreover, any pair of identical neighbouring elements cancel out; we call this 
the \emph{cancellation rule}. In addition, we have the \emph{reshuffling rule}:
\[
  \langle a_1,a_2,\ldots,a_{2n+1}\rangle
  = \langle a_{\sigma(1)},a_{\tau(2)},\ldots,a_{\tau(2n)},a_{\sigma(2n+1)}\rangle,
\]
where $\sigma$ is any permutation of the odd indices 
$\{1,3,\ldots,2n+1\}$ and $\tau$ is any permutation of the even indices 
$\{2,4,\ldots,2n\}$. 

\medskip

As a consequence, any pair of identical elements occurring in positions 
of opposite parity can be removed, provided the remaining elements are 
reshuffled so that each one keeps the same parity as its original position. 
The value of the entire bracketed expression remains unchanged.
\begin{Def}\cite{Brz1}\label{Def2.3}  
An \emph{affine space} over the field $\mathbb{K}$ is a non-empty abelian heap $\mathcal{A}$ equipped with a ternary action
\[
\blacktriangleright : \mathbb{K} \times \mathcal{A} \times \mathcal{A} \longrightarrow \mathcal{A}, 
\qquad
(\alpha, a, b) \longmapsto \alpha \blacktriangleright_a b,
\]
satisfying the following axioms for all $a,b,c \in \mathcal{A}$ and $\xi,\zeta \in \mathbb{K}$:

\begin{itemize}
    \item[(a)] The maps 
    \[
    \xi \blacktriangleright_a - : \mathcal{A} \to \mathcal{A} 
    \quad\text{and}\quad
    - \blacktriangleright_a b : \mathbb{K} \to \mathcal{A}
    \]
    are heap homomorphisms, where $\mathbb{K}$ is regarded as a heap with the operation 
    \[
    (\xi,\zeta,\gamma) \longmapsto \xi - \zeta + \gamma.
    \]

    \item[(b)] Compatibility with scalar multiplication:
    \[
    (\xi \zeta) \blacktriangleright_a b = 
    \xi \blacktriangleright_a \!\bigl(\zeta \blacktriangleright_a b\bigr).
    \]

    \item[(c)] Independence of the choice of base:
    \[
    \xi \blacktriangleright_a b 
    = 
    \bigl\langle 
      \xi \blacktriangleright_c b,\,
      \xi \blacktriangleright_c a,\,
      a
    \bigr\rangle.
    \]

    \item[(d)] Action of $0$ and $1$:
    \[
    0 \blacktriangleright_a b = a,
    \qquad 
    1 \blacktriangleright_a b = b.
    \]
\end{itemize}
\end{Def}
Here, the element $a$ in $\xi \blacktriangleright_a b$ is called the \emph{base} of the action.  

A homomorphism of affine space is a heap homomorphism $f$ satisfying
\[
f(\xi \blacktriangleright_a b) 
= 
\xi \blacktriangleright_{f(a)} f(b).
\]

When $\mathbb{K}$ is a commutative ring with identity, we typically use the term \emph{affine module} rather than affine space.
\begin{Exam}\label{Exam1}
Let
\[
sna(n,\mathbb{R}) := \Bigl\{ A \in \mathfrak{sl}(n+1,\mathbb{R}) \ \Big|\ 
\forall j,\ \sum_{i=1}^{n} a_{ij} = 1 = \sum_{i=1}^{n} a_{ji} \Bigr\}.
\]

Then $sna(n,\mathbb{R})$ is an affine space structure with the ternary operation and action
\[
\langle A,B,C \rangle = A - B + C, \qquad
\lambda \blacktriangleright_{A} B = \lambda B + (1 - \lambda) A,
\]
where addition and scalar multiplication are the usual operations on matrices. For details proof see the Proposition 4.1 in paper \cite{Brz2}.
\end{Exam}
Let us briefly explain that the definition of an affine space given here is equivalent 
to the classical definition involving a set $\mathcal{A}$ with a free and transitive action $+$
of a $\mathbb{K}$-vector space $\overrightarrow{\mathcal{A}}$. In particular, for all $b,c \in \overrightarrow{\mathcal{A}}$, there exists a 
unique vector from $b$ to $c$, and every vector in $\overrightarrow{\mathcal{A}}$ can be obtained in this way. 
Moreover, any point $a \in \mathcal{A}$ can be uniquely translated to another point 
$a + \overrightarrow{bc} \in \mathcal{A}$. This combination of three points equips $\mathcal{A}$ with the 
structure of an abelian heap:
\[
\langle a, b, c \rangle := a + \overrightarrow{bc}.
\]
Thus, $\mathcal{A}$ becomes an affine $\mathbb{K}$-space in the sense of Definition (\ref{Def2.3}), with the action
\[
\xi \blacktriangleright_{a} b := a + \xi \,\overrightarrow{ab}.
\]
Conversely, starting from an affine $\mathbb{K}$-space $\mathcal{A}$ as in Definition \ref{Def2.3}, one can reconstruct the underlying vector space by fixing any $o \in \mathcal{A}$ and defining vector addition and scalar multiplication as
\begin{equation}\label{eq1}
a + b := \langle a, o, b \rangle, \qquad 
\alpha a := \alpha \blacktriangleright_{o} a.
\end{equation}

The resulting vector space (or more generally, a $\mathbb{K}$-module if $\mathbb{K}$ is a commutative ring rather than a field), we call it the tangent space to $\mathcal{A}$ at $o$ or the vector space fibre of $\mathcal{A}$ at $o$ and is denoted by $T_{o}\mathcal{A}$. In this notation, 
$\overrightarrow{bc} = \langle o, b, c \rangle \in T_{o}\mathcal{A}$.

\medskip
\noindent
\begin{Rem}\cite{Brz1}\label{Rem1}
Different choices of the reference point $o \in \mathcal{A}$ lead to 
isomorphic $\mathbb{K}$- vector space. More precisely, for $o,e \in \mathcal{A}$, define
\[
\tau^{e}_{o} \colon T_{o}\mathcal{A} \longrightarrow T_{e}\mathcal{A}, \qquad 
a \longmapsto \langle a, o, e \rangle.
\]
This is an automorphism of heaps and an isomorphism of abelian groups 
$T_{o}\mathcal{A} \cong T_{e}\mathcal{A}$, called the \textit{translation isomorphism} 
(the inverse is $\tau^{o}_{e}$). Moreover, for all $a \in \mathcal{A}$ and $\xi \in \mathbb{K}$,
\[
\xi \blacktriangleright_{e} \tau^{e}_{o}(a) 
= \langle \xi \blacktriangleright_{e} a, \xi \blacktriangleright_{e} o, \xi \blacktriangleright_{e} e \rangle
= \langle \xi \blacktriangleright_{e} a, \xi \blacktriangleright_{e} o, o, e \rangle
= \langle \xi \blacktriangleright_{o} a, o, e \rangle 
= \tau^{e}_{o} (\xi \blacktriangleright_{o} a).
\]

The first equality holds because $\xi \blacktriangleright_{e}-$ is a heap homomorphism. The second follows from the Mal'cev identity and $\xi\blacktriangleright_{e}e=e$, while the penultimate equality 
uses the base-exchange property in Definition \ref{Def2.3}. This proves that 
$\tau^{e}_{o}$ is a $\mathbb{K}$- vector space isomorphism $T_{o}\mathcal{A} \cong T_{e}\mathcal{A}$.

Traditionally, an affine map is defined as a pair consisting of a function 
$f \colon \mathcal{A} \to \mathcal{B}$ and a uniquely determined linear map 
$\overrightarrow{f} \colon \overrightarrow{\mathcal{A}} \to \overrightarrow{\mathcal{B}}$, called the \textit{linearisation} of $f$, such that
\[
\overrightarrow{f}\bigl(\overrightarrow{ab}\bigr) = \overrightarrow{f(a)f(b)}.
\]
Equivalently, $f$ is a homomorphism of affine spaces in the sense of Definition (\ref{Def2.3}). 
Fixing $o_{\mathcal{A}} \in \mathcal{A}$ and $o_{\mathcal{B}} \in \mathcal{B}$ and defining the vector space structures 
$T_{{o}_{\mathcal{A}}}\mathcal{A}$ and $T_{{o}_{\mathcal{B}}}\mathcal{B}$ as in (\ref{eq1}), the linearisation of $f \colon \mathcal{A} \to \mathcal{B}$ is given by
\begin{equation}\label{eq1a}
\overrightarrow{f}\colon T_{o_{\mathcal{A}}}\mathcal{A} \longrightarrow T_{o_{\mathcal{B}}}\mathcal{B}, \qquad 
a \longmapsto \langle f(a), f(o_{\mathcal{A}}), o_{\mathcal{B}} \rangle.
\end{equation}
That is; for every $f\in $ Aff $(\mathcal{A},\mathcal{B})$ induces the unique linear transformation $\overrightarrow{f}\colon T_{o_{\mathcal{A}}}\mathcal{A}_{o_{\mathcal{A}}} \longrightarrow T_{o_{\mathcal{B}}}\mathcal{B}$, by $\overrightarrow{f}:a\mapsto f(a)-f(o_{\mathcal{A}})$. Conversely, given any linear transformation $\overrightarrow{f}\colon T_{o_{\mathcal{A}}}\mathcal{A} \longrightarrow T_{o_{\mathcal{B}}}\mathcal{B}$ and an any element $b\in \mathcal{B}$, the map $f:a\mapsto \overrightarrow{f}(a)+b$ is an affine transformation.
\end{Rem}
\begin{Def}\cite{Brz1}
 An affgebra ($\mathbb{K}$-affgebra) is an affine $\mathbb{K}$-space $\mathcal{A}$ together with a bi-affine multiplication $\mathcal{A}\times \mathcal{A}\rightarrow\mathcal{A}$.
\end{Def}
If the bi-affine multiplication is satisfy associative property, then it is called an associative affgebra (associative $\mathbb{K}$-affgebra).
A homomorphism of affgebra is an affine map that preserve the bi-affine multiplication.
\begin{Exam}
Let $(sna(n,\mathbb{R}),\langle-,-,-\rangle,\blacktriangleright)$ be an affine space, with the same ternary operation in Example (\ref{Exam1}). Then, $sna(n,\mathbb{R})$ is an associative affgebra with the bi-affine multiplication 
$$\mu(A,B)=AB,$$
where multiplication is usual operation on matrices.
\end{Exam}
\begin{Def}\cite{And2}
Let $\mathcal{A}$ be an affine $\mathbb{K}$-space. A Lie bracket on $\mathcal{A}$ is a bi-affine map $\{-,-\}:\mathcal{A}\times\mathcal{A}\rightarrow\mathcal{A}$ such that, for all $a,b,c\in \mathcal{A}$,
\begin{enumerate}
\item[(i)] Affine anti-symmetry, that is,
$$
\langle\{a,b\},\{a,a\},\{b,a\}\rangle=\{b,b\};
$$
\item[(ii)] The affine Jacobi identity, that is,
$$
\langle\{a,\{b,c\}\},\{a,\{a,a\}\},\{b,\{c,a\}\},\{b,\{b,b\}\},\{c,\{a,b\}\}\rangle=\{c,\{c,c\}\}.
$$
\end{enumerate}
\end{Def}
A homomorphism of Lie affgebra is an affine map that preserve Lie bracket.
\begin{Exam}
Let $(sna(n,\mathbb{R}),\langle-,-,-\rangle,\blacktriangleright)$ be an affine space, with the same ternary operation in Example (\ref{Exam1}). Then, $sna(n,\mathbb{R})$ is an Lie affgebra with the bi-affine bracket 
$$\{A,B\}=AB-BA+B,$$
where addition and multiplication are usual operation on matrices.
\end{Exam}
\section{Hom-Lie affgebra}
In this section we define Hom-associative affgebra, Hom-Lie affgebra and some interesting results.
\begin{Def}
 Let $\mathcal{A}$ be an affine space. A Hom-associative affgebra is a triple of $(\mathcal{A},\mu,\alpha)$ with the bi-affine map $\mu:\mathcal{A}\times\mathcal{A}\rightarrow \mathcal{A}$ and the unary affine map $\alpha:\mathcal{A}\rightarrow \mathcal{A}$ such that for all $a,b,c\in \mathcal{A}$  
$$
\mu(\alpha(a),\mu(b,c))=\mu(\mu(a,b),\alpha(c)).
$$
\end{Def}
If the affine map $\alpha$ is multiplicative, i.e; $\alpha(\mu(a,b))=\mu(\alpha(a),\alpha(b))$, for all $a,b\in \mathcal{A}$, then is called multiplicative Hom-associative affgebra.
\begin{Exam}
Let $(sna(n,\mathbb{R}),\langle-,-,-\rangle,\blacktriangleright)$ be an affine space, with the same ternary operation in Example (\ref{Exam1}). Then, $sna(n,\mathbb{R})$ is a Hom-associative affgebra with unary affine map $\alpha:sna(n,\mathbb{R})\rightarrow sna(n,\mathbb{R})$ and the bi-affine multiplication 
$$\alpha(A)=PAP^{-1},~~~~~~~~~~~~~~\mu(A,B)=P(AB)P^{-1},~~~~~~~~~\text{for P $\in sna(n,\mathbb{R})$ and $det(P)\neq 0$,}$$
where multiplication is usual multiplication on matrices. It is a routine work to show the affine hom-associativity using the associativity of matrices multiplication.
\end{Exam}
\begin{Def}
Let $(\mathcal{A},\mu,\alpha)$ be a Hom-associative affgebra. A affine morphism, $f:\mathcal{A}\rightarrow \mathcal{A}$ is called a morphism of Hom-associative affgebra if 
\begin{enumerate}
\item[(i)] $f\circ\alpha=\alpha\circ f$.
\item[(ii)] For all $a,b\in \mathcal{A}$,~~~~
$
f(\mu(a,b))=\mu(f(a),f(b)).
$
\end{enumerate}
\end{Def}
\begin{Def}
Let $\mathcal{A}$ be an affine space over $\mathbb{K}$. A Hom-Lie affgebra is a triple $(\mathcal{A},\{~,~\},\alpha)$ with the bi-affine map $\{~,~\}:\mathcal{A} \times \mathcal{A}\rightarrow \mathcal{A}$ and the affine map $\alpha:\mathcal{A}\rightarrow \mathcal{A}$. For all $a,b,c\in \mathcal{A}$, following condition holds
\begin{enumerate}
\item[(a)] Affine anti-symmetry, that is,
$$
\langle\{a,b\},\{a,a\},\{b,a\}\rangle=\{b,b\};
$$ 
\item[(b)]Affine Hom-Jacobi identity, that is,
$$
\langle \{\alpha(a),\{b,c\}\},\{\alpha(a),\{a,a\}\},\{\alpha(b),\{c,a\}\},\{\alpha(b),\{b,b\}\},\{\alpha(c),\{a,b\}\}\rangle=\{\alpha(c),\{c,c\}\}.
$$
\end{enumerate}
\end{Def}
If the affine map $\alpha$ is multiplicative, i.e; $\alpha\{a,b\}=\{\alpha(a),\alpha(b)\}$, for all $a,b\in \mathcal{A}$, then is called multiplicative Hom-Lie affgebra.
\begin{Exam}
Let $(sna(n,\mathbb{R}),\langle-,-,-\rangle,\blacktriangleright)$ be an affine space, with the same ternary operation in Example (\ref{Exam1}). Then, $sna(n,\mathbb{R})$ is a Hom-Lie affgebra with unary affine map $\alpha:sna(n,\mathbb{R})\rightarrow sna(n,\mathbb{R})$ and the bi-affine bracket 
$$\alpha(A)=PAP^{-1},~~~~~~~~~~~\{A,B\}=P(AB)P^{-1}-P(BA)P^{-1}+\alpha(B),~~~~~~~~\text{for P $\in sna(n,\mathbb{R})$ and $det(P)\neq 0$.}$$
Similar to the Proposition (\ref{Prop1}), it is easy to show the affine anti-symmetry and affine Hom-Jacobi identity using associativity and distributive property of matrices multiplication.
\end{Exam}
\begin{Exam}
Let $\mathcal{A}$ be an affine space. Let $\phi:\mathcal{A}\rightarrow\mathcal{A}$ and $\alpha:\mathcal{A}\rightarrow\mathcal{A}$ be two endomorphism of affine space $\mathcal{A}$. Define the bracket $\{-,-\}:\mathcal{A}\times\mathcal{A}\rightarrow\mathcal{A}$ by $\{a,b\}=\phi(a)$. The bracket is bi-affine as both the constant map and $\phi$ are affine endomorphisms. For all $a,b,c\in \mathcal{A}$, we have
$$
\langle\{a,b\},\{a,a\},\{b,a\}\rangle=\langle\phi(a),\phi(a),\phi(b)\rangle=\phi(b)=\{b,b\};
$$
and
\begin{align*}
&~~~\langle\{\alpha(a),\{b,c\}\},\{\alpha(a),\{a,a\}\},\{\alpha(b),\{c,a\}\},\{\alpha(b),\{b,b\}\},\{\alpha(c),\{a,b\}\}\rangle\\
&=\langle\{\alpha(a),\phi(b)\},\{\alpha(a),\phi(a)\},\{\alpha(b),\phi(c)\},\{\alpha(b),\phi(b)\},\{\alpha(c),\phi(a)\}\rangle\\
&=\langle\phi(\alpha(a)),\phi(\alpha(a)),\phi(\alpha(b)),\phi(\alpha(b)),\phi(\alpha(c))\rangle\\
&=\phi(\alpha(c))\\
&=\{\alpha(c),\{c,c\}\}.
\end{align*}
Hence, the bracket $\{-,-\}$ satisfies affine anti-symmetry and affine Hom-Jacobi identity. Therefore, $(\mathcal{A},\{-,-\},\alpha)$ is Hom-Lie affgebra.
\end{Exam}
\begin{Prop}
Given an affine space $\mathcal{A}$, $\alpha:\mathcal{A}\rightarrow\mathcal{A}$ be an endomorphism of affine space and $\xi\in \mathbb{K}$, define the bracket
$$
\{-,-\}:\mathcal{A}\times\mathcal{A}\rightarrow\mathcal{A},~~~~~~~~~\{a,b\}=\xi\blacktriangleright_{a}b.
$$
Then, $(\mathcal{A},\{-,-\},\alpha)$ is a Hom-Lie affgebra.
\end{Prop}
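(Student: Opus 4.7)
The plan is to verify the three conditions required by the definition of a Hom-Lie affgebra: bi-affinity of the bracket $\{a,b\}=\xi\blacktriangleright_a b$, affine anti-symmetry, and the affine Hom-Jacobi identity. Bi-affinity is immediate from Definition~\ref{Def2.3}: for fixed $a$, the map $\xi\blacktriangleright_a -$ is a heap homomorphism compatible with the scalar action by axiom (a), while affinity in the first slot follows from base-exchange (c), which rewrites $\xi\blacktriangleright_a b = \langle\xi\blacktriangleright_c b,\xi\blacktriangleright_c a,a\rangle$ as a heap-theoretic composition of affine maps in $a$. Observe that $\alpha$ appears in the Hom-Jacobi identity only as an outer wrapping and is never composed with a bracket from the inside, so no compatibility of $\alpha$ with $\{-,-\}$ is required beyond its being an affine endomorphism.

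To verify the two heap identities cleanly, I would fix a reference point $o\in\mathcal{A}$ and pass to the tangent space $T_o\mathcal{A}$ via the identifications of Remark~\ref{Rem1}. In this linear picture the action takes the explicit form $\xi\blacktriangleright_a b=(1-\xi)a+\xi b$, so the bracket is the affine combination $\{a,b\}=(1-\xi)a+\xi b$ with coefficients summing to $1$; in particular $\{a,a\}=a$ and $\{b,b\}=b$. Affine anti-symmetry then reduces, via $\langle x,y,z\rangle = x-y+z$ in $T_o\mathcal{A}$, to the elementary identity $\bigl((1-\xi)a+\xi b\bigr) - a + \bigl(\xi a + (1-\xi)b\bigr) = b$, which follows from $(1-\xi)-1+\xi=0$ and $\xi+(1-\xi)=1$.

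For affine Hom-Jacobi, each nested term linearizes as $\{\alpha(x),\{y,z\}\} = (1-\xi)\alpha(x) + \xi(1-\xi)y + \xi^{2}z$, and the five-fold heap expression on the left-hand side becomes the alternating sum of these terms. Writing $u:=1-\xi$ and $v:=\xi$, so that $u+v=1$, coefficient comparison shows the $\alpha(a)$ and $\alpha(b)$ contributions cancel pairwise; the coefficients of $a$ and $b$ both collapse to $v(u+v-1)=0$; and only $u\alpha(c)+vc=\{\alpha(c),\{c,c\}\}$ survives, matching the right-hand side exactly.

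The main obstacle is purely bookkeeping rather than structural. Because the two identities are heap-theoretic, they are base-invariant by Remark~\ref{Rem1}: the translation isomorphism $\tau^{e}_{o}$ intertwines the action $\xi\blacktriangleright_a b$, so the choice of $o$ at which we linearize is immaterial, and the entire verification ultimately rests on the single relation $(1-\xi)+\xi=1$ in $\mathbb{K}$.
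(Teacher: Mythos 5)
Your proof is correct, and it follows the same underlying strategy as the paper's: fix a reference point $o$, use the base-exchange axiom to express every occurrence of $\xi\blacktriangleright_a b$ over the single base $o$, and then verify the two identities by cancellation. The difference is purely one of presentation. The paper stays in heap-bracket notation and eliminates terms via the cancellation and reshuffling rules on a long odd-length bracket, whereas you pass to the retract $T_o\mathcal{A}$, write $\xi\blacktriangleright_a b=(1-\xi)a+\xi b$, and compare coefficients; your observation that everything reduces to $(1-\xi)+\xi=1$ makes the bookkeeping more transparent, and your remarks that the verification is base-independent (via $\tau^e_o$) and that $\alpha$ needs no compatibility with the bracket are both accurate and worth stating explicitly. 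Your coefficient computation checks out: the $\alpha(a)$ and $\alpha(b)$ terms cancel in pairs, the $a$ and $b$ coefficients each reduce to $\xi(\xi+(1-\xi)-1)=0$, and the surviving $(1-\xi)\alpha(c)+\xi c$ is exactly $\{\alpha(c),\{c,c\}\}$. The only place you are slightly more cavalier than the paper is bi-affinity, which the paper delegates to Proposition~3.4 of \cite{Brz1}; your sketch (heap-homomorphism in the second slot by axiom (a), base-exchange for the first slot, or simply the fact that $(1-\xi)a+\xi b$ is an affine combination) is adequate but would need the scalar-action compatibility $\{a,\zeta\blacktriangleright_b c\}=\zeta\blacktriangleright_{\{a,b\}}\{a,c\}$ spelled out if written in full.
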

\begin{proof}
The bracket $\{~,~\}$ is a bi-affine and satisfy affine anti-symmetry follows from Proposition 3.4 of the paper \cite{Brz1}. Let we fixed an element $o\in A$, and using the base change property of the Definition (\ref{Def2.3}). we can write action $\blacktriangleright_{a}$, in terms of $\blacktriangleright_{o}$ to obatin the follows 
$$
\{\alpha(a),\{b,c\}\}=\langle \xi^{2}\blacktriangleright_{o}c,\xi^{2}\blacktriangleright_{o}b,\xi\blacktriangleright_{o}b,\xi\blacktriangleright_{o}\alpha(a),\alpha(a)\rangle.
$$
Now, we have
\begin{align*}
&~~~\langle\{\alpha(a),\{b,c\}\},\{\alpha(a),\{a,a\}\},\{\alpha(b),\{c,a\}\},\{\alpha(b),\{b,b\}\},\{\alpha(c),\{a,b\}\}\rangle\\
&=\langle \xi^{2}\blacktriangleright_{o}c,\xi^{2}\blacktriangleright_{o}b,\xi\blacktriangleright_{o}b,\xi\blacktriangleright_{o}\alpha(a),\alpha(a), \xi^{2}\blacktriangleright_{o}a,\xi^{2}\blacktriangleright_{o}a,\xi\blacktriangleright_{o}a,\xi\blacktriangleright_{o}\alpha(a),\alpha(a)\\
&~~~~~~ \xi^{2}\blacktriangleright_{o}a,\xi^{2}\blacktriangleright_{o}c,\xi\blacktriangleright_{o}c,\xi\blacktriangleright_{o}\alpha(b),\alpha(b), \xi^{2}\blacktriangleright_{o}b,\xi^{2}\blacktriangleright_{o}b,\xi\blacktriangleright_{o}b,\xi\blacktriangleright_{o}\alpha(b),\alpha(b)\\
&~~~~~~
\xi^{2}\blacktriangleright_{o}b,\xi^{2}\blacktriangleright_{o}a,\xi\blacktriangleright_{o}a,\xi\blacktriangleright_{o}\alpha(c),\alpha(c)\rangle\\
&=\langle\xi\blacktriangleright_{o}c,\xi\blacktriangleright_{o}\alpha(c),\alpha(c)\rangle\\
&=\{\alpha(c),\{c,c\}\}.
\end{align*}
Hence, the bracket is satisfy the affine Hom-Jacobi identity. Therefore, $(\mathcal{A},\{-,-\},\alpha)$ is an Hom-Lie affgebra. 
\end{proof}
\begin{Def}
Let $(\mathcal{A},\{~,~\},\alpha)$ be a Hom-Lie affgebra. An affine morphism, $f:\mathcal{A}\rightarrow \mathcal{A}$ is called a morphism of Hom-Lie affgebra if, 
\begin{enumerate}
\item[(i)] $f\circ\alpha=\alpha\circ f$.
\item[(ii)] For all $a,b\in \mathcal{A}$,~~~~~~~
$
f\{a,b\}=\{f(a),f(b)\}.
$
\end{enumerate}
\end{Def}
\section{Relationship Between Different Hom-affgebras}
Henceforth, we assume Hom-affgebra (associative or non-associative) means it is multiplicative Hom-affgebra.
\begin{Prop}\label{Prop1}
A Hom-associative affgebra $\mathcal{A}$ is a Hom-Lie affgebra with the bracket
$$
\{a,b\}=\langle ab,ba,\alpha(b)\rangle,
$$
for all $a,b\in \mathcal{A}$, where $\alpha$ is a Hom map.
\end{Prop}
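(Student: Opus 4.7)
The statement is the Hom-affine analogue of the classical fact that $[a,b]=ab-ba$ turns an associative algebra into a Lie algebra; here the additive subtraction is replaced by the three-term heap operation $\langle-,-,-\rangle$, and the extra summand $\alpha(b)$ plays the role of a base-point so that $\{a,a\}$ collapses usefully. I would verify the three axioms of a Hom-Lie affgebra in order: bi-affinity of $\{-,-\}$, affine anti-symmetry, and the affine Hom-Jacobi identity.

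Bi-affinity is almost immediate: $\{a,b\}$ is a heap combination of the bi-affine maps $(a,b)\mapsto ab$ and $(a,b)\mapsto ba$ with the affine map $b\mapsto\alpha(b)$, and heap combinations of bi-affine maps are again bi-affine. For anti-symmetry, I would first compute $\{a,a\}=\langle aa,aa,\alpha(a)\rangle=\alpha(a)$ by Mal'tsev cancellation and $\{b,a\}=\langle ba,ab,\alpha(a)\rangle$, then flatten $\langle\{a,b\},\{a,a\},\{b,a\}\rangle$ to the single $7$-term heap $\langle ab,ba,\alpha(b),\alpha(a),ba,ab,\alpha(a)\rangle$ using heap associativity, and invoke the opposite-parity cancellation rule (three deletions) to reduce it to $\alpha(b)=\{b,b\}$.

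The Hom-Jacobi identity is the main obstacle. My plan is to expand each sub-bracket $\{\alpha(x),\{y,z\}\}$ using bi-affinity of $\mu$ (so each slot is a heap homomorphism) and multiplicativity of $\alpha$, producing a $9$-term heap in the products $\alpha(x)(yz),\alpha(x)(zy),(yz)\alpha(x),(zy)\alpha(x),\alpha(x)\alpha(z),\alpha(z)\alpha(x),\alpha(y)\alpha(z),\alpha(z)\alpha(y)$ and $\alpha^{2}(z)$. I would then apply Hom-associativity once to normalize each of the first four products into the form $(xy)\alpha(z)$ or $\alpha(y)(zx)$. The two diagonal sub-brackets $\{\alpha(a),\{a,a\}\}$ and $\{\alpha(b),\{b,b\}\}$ collapse, by the anti-symmetry computation applied twice, to $\alpha^{2}(a)$ and $\alpha^{2}(b)$ respectively. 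Flattening the whole five-fold heap into one long expression, the pure $\alpha$-products pair up across the three remaining sub-brackets and cancel by opposite parity, while each surviving mixed term such as $(ab)\alpha(c)$ matches, after a second use of Hom-associativity, an opposite-parity occurrence like $\alpha(a)(bc)$ arising from another sub-bracket. After these cancellations the only surviving entry is $\alpha^{2}(c)=\{\alpha(c),\{c,c\}\}$, which is exactly the right-hand side.

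The hard part is purely combinatorial: bookkeeping of the $45$ flattened entries and recognizing two rounds of Hom-associativity, one to normalize each nine-term piece, and a second to pair up surviving mixed terms across different sub-brackets. Anti-symmetry, multiplicativity of $\alpha$, and the abelian heap cancellation/reshuffling rules then do the rest automatically.
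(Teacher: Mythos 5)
Your proposal follows essentially the same route as the paper's proof: bi-affinity of the bracket, the seven-term heap reduction $\langle ab,ba,\alpha(b),\alpha(a),ba,ab,\alpha(a)\rangle=\alpha(b)$ for anti-symmetry, and the expansion of each $\{\alpha(x),\{y,z\}\}$ into a nine-term heap followed by opposite-parity cancellation and Hom-associativity ($\alpha(x)(yz)=(xy)\alpha(z)$) to pair the surviving mixed terms, leaving exactly $\alpha^{2}(c)$. The only quibble is bookkeeping: the flattened heap has $9+1+9+1+9=29$ entries rather than $45$, and a single application of Hom-associativity per surviving mixed pair suffices, so the separate ``normalization'' pass you describe is redundant (though harmless).
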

\begin{proof}
For any $a,b,c,d\in \mathcal{A}$ and any scalar $\zeta\in \mathbb{K}$, we get
\begin{align*}
\{a,\langle b,c,d\rangle\}&=\langle a\langle b,c,d\rangle,\langle b,c,d\rangle a,\alpha\langle b,c,d\rangle\rangle\\
&=\langle ab,ac,ad,ba,ca,da,\alpha(b),\alpha(c),\alpha(d)\rangle\\
&=\langle ab,ba,\alpha(b),ac,ca,\alpha(c),ad,da,\alpha(d)\rangle\\
&=\langle \{a,b\},\{a,c\},\{a,d\}\rangle.
\end{align*}
Similarly, we have $\{\langle b,c,d\rangle,a\}=\langle\{b,a\},\{c,a\},\{d,a\}\rangle$.
\begin{align*}
\{a,\zeta\blacktriangleright_{b}c\}&=\langle a(\zeta\blacktriangleright_{b}c),(\zeta\blacktriangleright_{b}c)a,\alpha(\zeta\blacktriangleright_{b}c)\rangle\\
&=\langle \zeta\blacktriangleright_{ab}ac,\zeta\blacktriangleright_{ba}ca,\zeta\blacktriangleright_{\alpha(b)}\alpha(c)\rangle\\
&=\langle \zeta\blacktriangleright_{\langle ab,ba,\alpha(b)\rangle}ac,\zeta\blacktriangleright_{\langle ab,ba,\alpha(b)\rangle}ab,ab,\zeta\blacktriangleright_{\langle ab,ba,\alpha(b)}ca,\zeta\blacktriangleright_{\langle ab,ba \alpha(b)\rangle}ba,\\
&~~~~~ba,\zeta\blacktriangleright_{\langle ab,ba,\alpha(b)}\alpha(c),\zeta\blacktriangleright_{\langle ab,ba,\alpha(b)\rangle}\alpha(b),\alpha(b)\rangle~~~~~~~~~~~~~~~~~~~~~\text{\{Base change property \} }\\
&=\langle\zeta\blacktriangleright_{\langle ab,ba,\alpha(b)\rangle}\langle ac,ca,\alpha(c)\rangle,\zeta\blacktriangleright_{\langle ab,ba,\alpha(b)\rangle}\langle ab,ba,\alpha(b)\rangle,\langle ab,ba,\alpha(b)\rangle\rangle\\
&=\zeta\blacktriangleright_{\langle ab,ba,\alpha(b)}\langle ac,ca,\alpha(c)\rangle\\
&=\zeta\blacktriangleright_{\{a,b\}}\{a,c\}.
\end{align*}
Similarly, we also have $\{\zeta\blacktriangleright_{b}c,a\}=\zeta\blacktriangleright_{\{b,a\}}\{c,a\}$.\\
Therefore, the bracket $\{~ ,~\}$ is a bi-affine morphism. Now, we check the affine anti-symmetry property. For all a,b $\in \mathcal{A}$, we have
$$
\langle\{a,b\},\{a,a\},\{b,a\}\rangle=\langle ab,ba,\alpha(b),\alpha(a),ba,ab,\alpha(a)\rangle=\alpha(b)=\{b,b\}.
$$
Observed that
\begin{align*}
\{\alpha(a),\{b,c\}\}&=\{\alpha(a),\langle bc,cb,\alpha(c)\rangle\}\\
&=\langle \alpha(a)\langle bc,cb,\alpha(c)\rangle,\langle bc,cb,\alpha(c)\rangle \alpha(a),\alpha\langle bc,cb,\alpha(c)\rangle\rangle\\
&=\langle \alpha(a)(bc),\alpha(a)(cb),\alpha(ac),(bc)\alpha(a),(cb)\alpha(a),\alpha(ca),\alpha(bc),\alpha(cb),\alpha^{2}(c)\rangle.
\end{align*}
Similarly to the above, we have
\begin{align*}
&\{\alpha(b),\{c,a\}\}=\langle \alpha(b)(ca),\alpha(b)(ac),\alpha(ba),(ca)\alpha(b),(ac)\alpha(b),\alpha(ab),\alpha(ca),\alpha(ac),\alpha^{2}(a)\rangle.\\
&\{\alpha(c),\{a,b\}\}=\langle \alpha(c)(ab),\alpha(c)(ba),\alpha(cb),(ab)\alpha(c),(ba)\alpha(c),\alpha(bc),\alpha(ab),\alpha(ba),\alpha^{2}(b)\rangle.\\
&\{\alpha(a),\{a,a\}\}=\{\alpha(a),\langle aa,aa,\alpha(a)\rangle\}=\{\alpha(a),\alpha(a)\}=\alpha^{2}(a).\\
&\{\alpha(b),\{b,b\}\}=\alpha^{2}(b).\\
&\{\alpha(c),\{c,c\}\}=\alpha^{2}(c).
\end{align*}
Now, we have 
\begin{align*}
&\langle\{\alpha(a),\{b,a\}\},\{\alpha(a),\{a,a\}\},\{\alpha(b),\{c,a\}\},\{\alpha(b),\{b,b\}\},\{\alpha(c),\{a,b\}\}\rangle\\
&=\langle \alpha(a)(bc),\alpha(a)(cb),\alpha(ac),(bc)\alpha(a),(cb)\alpha(a),\alpha(ca),\alpha(bc),\alpha(cb),\alpha^{2}(c),\alpha^{2}(a),\\
&~~~~~~~~~\alpha(b)(ca),\alpha(b)(ac),\alpha(ba),(ca)\alpha(b),(ac)\alpha(b),\alpha(ab),\alpha(ca),\alpha(ac),\alpha^{2}(a),\alpha^{2}(b),\\
&~~~~~~~~~\alpha(c)(ab),\alpha(c)(ba),\alpha(cb),(ab)\alpha(c),(ba)\alpha(c),\alpha(bc),\alpha(ab),\alpha(ba),\alpha^{2}(b)\rangle\\
&=\alpha^{2}(c)\\
&=\{\alpha(c),\{c,c\}\}.
\end{align*}
Hence, the triple ($\mathcal{A},\{~,~\}$) is a Hom-Lie affgebra. 
\end{proof}
\begin{Def}
A left Hom-pre-Lie affgebra is an affine space $\mathcal{A}$ together with the bi-affine map $\bullet:\mathcal{A}\times\mathcal{A}\rightarrow \mathcal{A}$ and an unary affine morphism $\alpha:\mathcal{A}\rightarrow \mathcal{A}$, such that, for all $a,b,c\in \mathcal{A},$
\begin{equation}\label{eq2}
(a\bullet b)\bullet\alpha(c)=\langle \alpha(a)\bullet (b\bullet c),\alpha(b)\bullet(a\bullet c),(b\bullet a)\bullet\alpha(c)\rangle.
\end{equation}
Similarly, a right Hom-pre-Lie affgebra is an affine space with bi-affine binary operation $\bullet$ and unary affine morphism $\alpha$, satisfying the following condition
\begin{equation}\label{eq4}
\alpha(a)\bullet(b\bullet c)=\langle(a\bullet b)\bullet\alpha(c),(a\bullet c)\bullet \alpha(b),\alpha(a)\bullet(c\bullet b)\rangle.
\end{equation}
If the unary map $\alpha$ is multiplicative, i.e; $\alpha(a\bullet b)=\alpha(a)\bullet\alpha(b),$ for all $a,b\in \mathcal{A}$, then it is called multiplicative left(right) Hom-pre-Lie affgebra. Throughout this paper by Hom-pre-Lie affgebra we also means multiplicative Hom-pre-Lie affgebra.
\end{Def}
\begin{Rem}
It is worth noting that when these defining condition (\ref{eq2}) and (\ref{eq4}) are expressed in terms of addition $a+b=\langle a,o,b\rangle$, they become identical to the conventional definition of Hom-pre-Lie algebras.
\end{Rem}
\begin{Prop}
Let $(\mathcal{A},\bullet,\alpha)$ be a right (or left) Hom-pre-Lie affgebra. Then $\mathcal{A}$ is a Hom-Lie affgebra with the same unary affine morphism $\alpha$ and the bracket
$$
\{a,b\}=\langle a\bullet b,b\bullet a,\alpha(a)\rangle,
$$
for all $a,b\in \mathcal{A}.$
\end{Prop}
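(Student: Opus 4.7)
The plan is to verify the three defining conditions of a Hom-Lie affgebra for the proposed bracket: bi-affinity, affine anti-symmetry, and the affine Hom-Jacobi identity. Throughout I use the blanket assumption of Section~4 that $\alpha$ is multiplicative, i.e.\ $\alpha(x\bullet y)=\alpha(x)\bullet\alpha(y)$, and I will pass freely between heap expressions and their retracted-group image $\langle x,y,z\rangle = x-y+z$ at an auxiliary basepoint, so that the cancellation and reshuffling rules recalled in Section~2 can be read off as ordinary sign bookkeeping.

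Bi-affinity of $\{-,-\}$ is immediate, since $\bullet$ is bi-affine, $\alpha$ is affine, and the ternary heap operation commutes coordinate-wise with any affine map. For anti-symmetry I first simplify $\{a,a\} = \langle a\bullet a, a\bullet a, \alpha(a)\rangle = \alpha(a)$ by the cancellation rule, and similarly $\{b,b\} = \alpha(b)$. Flattening $\langle \{a,b\},\{a,a\},\{b,a\}\rangle$ to a seven-entry heap leaves three pairs of repeated elements $(a\bullet b, a\bullet b)$, $(b\bullet a, b\bullet a)$, $(\alpha(a), \alpha(a))$ sitting in opposite-parity positions; reshuffling and cancelling each pair yields exactly $\alpha(b) = \{b,b\}$.

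For the Hom-Jacobi identity, the main step, I introduce the shorthand $A(x,y,z):=\alpha(x)\bullet(y\bullet z)$ and $M(x,y,z):=(x\bullet y)\bullet\alpha(z)$. The right Hom-pre-Lie axiom~(\ref{eq4}) encodes precisely the symmetry
\[
A(x,y,z) - A(x,z,y) \;=\; M(x,y,z) - M(x,z,y),
\]
i.e.\ the twisted right associator is symmetric in its last two arguments; this is the identity that must carry the Jacobi cancellation. Using bi-affinity of $\bullet$ together with multiplicativity of $\alpha$, I expand each of $\{\alpha(a),\{b,c\}\}$, $\{\alpha(b),\{c,a\}\}$ and $\{\alpha(c),\{a,b\}\}$ into a flat seven-entry heap expression containing two $A$-terms, two $M$-terms, two $\alpha$-images of products, and one $\alpha^2$-term. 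The degenerate entries $\{\alpha(a),\{a,a\}\}$ and $\{\alpha(b),\{b,b\}\}$ collapse to $\alpha^2(a)$ and $\alpha^2(b)$ by the same cancellation used above.

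The final stage is pure bookkeeping: flatten the entire left-hand side of the Hom-Jacobi identity into one long heap expression and verify cancellation in three waves. The $\alpha^2(a)$ and $\alpha^2(b)$ contributions each appear in opposite-parity slots and vanish immediately. The six $A$-terms group into three cyclic pairs $A(x,y,z)-A(x,z,y)$ with $(x,y,z)$ running over the cyclic permutations of $(a,b,c)$; each pair is converted via the right Hom-pre-Lie symmetry into the matching $M$-pair, and together with the six directly-occurring $M$-terms these cancel pair-by-pair over all six orderings of $(a,b,c)$. The remaining $\alpha$-applied products are matched in opposite-parity pairs by reshuffling, leaving only $\alpha^2(c) = \{\alpha(c),\{c,c\}\}$. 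The left Hom-pre-Lie case is handled by the mirror symmetry $A(x,y,z)-A(y,x,z) = M(x,y,z)-M(y,x,z)$ coming from~(\ref{eq2}), with an analogous cyclic grouping. The conceptual content parallels the classical pre-Lie-implies-Lie argument, and the genuine obstacle is purely bookkeeping: one must ensure that the Hom-pre-Lie symmetry is applied exactly at the cyclic pairs whose $M$-counterparts do occur in the flattened expression, so that no stray $A$- or $M$-term is left over.
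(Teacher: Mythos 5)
Your proof follows essentially the same route as the paper's: expand each bracket in the cyclic sum using bi-affinity and multiplicativity of $\alpha$, use the Hom-pre-Lie identity to convert the twisted-associator ($A$-type) differences into $M$-type differences, and cancel everything by the reshuffling and cancellation rules, leaving $\alpha^{2}(c)$. One caveat, though, on the step you call ``pure bookkeeping'': your claim that the remaining $\alpha$-applied products cancel in opposite-parity pairs is only true if the bracket carries $\alpha$ of the \emph{second} argument, $\{a,b\}=\langle a\bullet b,\,b\bullet a,\,\alpha(b)\rangle$, as in Proposition \ref{Prop1} and as the paper's own computation of $\{\alpha(a),\{b,c\}\}$ silently assumes; with the bracket as literally printed in the statement ($\alpha(a)$), the cyclic sum leaves the nonzero residue $\alpha(a\bullet b)-\alpha(b\bullet a)+\alpha(b\bullet c)-\alpha(c\bullet b)+\alpha(c\bullet a)-\alpha(a\bullet c)$, so the Hom-Jacobi identity fails. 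You should state explicitly which convention you use. Also, each expanded bracket $\{\alpha(a),\{b,c\}\}$ flattens to nine entries, not seven: two $A$-terms, two $M$-terms, \emph{four} $\alpha$-images of products (two coming from $\alpha(a)\bullet\alpha(c)$ and $\alpha(c)\bullet\alpha(a)$ via multiplicativity, two from $\alpha(\{b,c\})$), and one $\alpha^{2}$-term; this does not affect the cancellation pattern but the count should be corrected.
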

\begin{proof}
Similar to the proof of the Proposition (\ref{Prop1}), it is a routine calculation to show that $\alpha$ are affine anti-symmetry and multiplicative. Therefore, we need to only check affine Hom-Jacobi identity holds in the case of right Hom-pre Lie affgebra. For any $a,b,c\in \mathcal{A}$, we have
\begin{align*}
&\{\alpha(a),\{b,c\}\}\\
&=\{\alpha(a),\langle b\bullet c,c\bullet b,\alpha(c)\rangle\}\\
&=\langle \alpha(a)\bullet(b\bullet c),\alpha(a)\bullet(c\bullet b),\alpha(a)\bullet\alpha(c),(b\bullet c)\bullet\alpha(a),(c\bullet b)\bullet \alpha(a),\alpha(c)\bullet\alpha(a),\alpha\langle b\bullet c,c\bullet b,\alpha(c)\rangle\\
&=\langle (a\bullet b)\bullet\alpha(c),(a\bullet c)\bullet\alpha(b),\alpha(a\bullet c),(b\bullet c)\bullet\alpha(a),(c\bullet b)\bullet \alpha(a),\alpha(c\bullet a),\alpha(b\bullet c),\alpha(c\bullet b),\alpha^{2}(c)\rangle.
\end{align*}
Similarly, using condition (\ref{eq4}) and $\alpha(a\bullet b)=\alpha(a)\bullet\alpha(b)$, we have
\begin{align*}
\{\alpha(b),\{c,a\}\}&=\langle(b\bullet c)\bullet\alpha(a),(b\bullet a)\bullet\alpha(c),\alpha(b\bullet a),(c\bullet a)\bullet\alpha(b),(a\bullet c)\bullet\alpha(b),\alpha(a\bullet b),\alpha(c\bullet a),\alpha(a\bullet c),\alpha^{2}(a)\rangle,\\
\{\alpha(c),\{a,b\}\}&=\langle(c\bullet a)\bullet\alpha(b),(c\bullet b)\bullet\alpha(a),\alpha(c\bullet b),(a\bullet b)\bullet\alpha(c),(b\bullet a)\bullet\alpha(c),\alpha(b\bullet c),\alpha(a\bullet b),\alpha(b\bullet a),\alpha^{2}(b)\rangle,\\
\{\alpha(a),\{a,a\}\}&=\{\alpha(a),\langle a\bullet a,a\bullet a,\alpha(a)\rangle\}=\alpha^{2}(a),\\
\{\alpha(b),\{b,b\}\}&=\alpha^{2}(b),\\
\{\alpha(c),\{c,c\}\}&=\alpha^{2}(c).
\end{align*}
In abelian heap permuting odd position to odd and even to even, and applying the mal'tcev property, we have
\begin{align*}
&\langle\{\alpha(a),\{b,c\}\},\{\alpha(a),\{a,a\}\},\{\alpha(b),\{c,a\}\},\{\alpha(b),\{b,b\}\},\{\alpha(c),\{a,b\}\}\rangle\\
&=\langle (a\bullet b)\bullet\alpha(c),(a\bullet c)\bullet\alpha(b),\alpha(a\bullet c),(b\bullet c)\bullet\alpha(a),(c\bullet b)\bullet \alpha(a),\alpha(c\bullet a),\alpha(b\bullet c),\alpha(c\bullet b),\alpha^{2}(c),\alpha^{2}(a),\\
&~~~~~~~(b\bullet c)\bullet\alpha(a),(b\bullet a)\bullet\alpha(c),\alpha(b\bullet a),(c\bullet a)\bullet\alpha(b),(a\bullet c)\bullet\alpha(b),\alpha(a\bullet b),\alpha(c\bullet a),\alpha(a\bullet c),\alpha^{2}(a),\alpha^{2}(b),\\
&~~~~~~~~(c\bullet a)\bullet\alpha(b),(c\bullet b)\bullet\alpha(a),\alpha(c\bullet b),(a\bullet b)\bullet\alpha(c),(b\bullet a)\bullet\alpha(c),\alpha(b\bullet c),\alpha(a\bullet b),\alpha(b\bullet a),\alpha^{2}(b) \rangle\\
&=\alpha^{2}(c)\\
&=\lbrace \alpha(c),\{c,c\}\}.
\end{align*}
Therefore, ($\mathcal{A},\{~,~\},\alpha$) is a Hom-Lie affgebra. 
\end{proof}
\begin{Prop}\label{Prop3.7}
Let $(\mathcal{A},\mu)$ be an associative affgebra and $\alpha: \mathcal{A}\rightarrow \mathcal{A}$ be an affgebra morphism. Then the triple $\mathcal{A}_{\alpha}=(\mathcal{A},\mu_{\alpha}=\alpha\circ\mu,\alpha)$ is a Hom-associative affgebra.
\end{Prop}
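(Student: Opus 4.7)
The plan is to adapt the classical Yau twist construction to the affine setting. First, I would verify that $\mu_\alpha=\alpha\circ\mu$ is bi-affine: since $\mu$ is bi-affine by hypothesis and $\alpha$ is affine, post-composing $\mu$ with $\alpha$ preserves affinity in each argument separately, so $(\mathcal{A},\mu_\alpha,\alpha)$ carries the correct structural data to be a candidate Hom-associative affgebra.

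The main content is the Hom-associativity identity
\[
\mu_\alpha(\alpha(a),\mu_\alpha(b,c)) \;=\; \mu_\alpha(\mu_\alpha(a,b),\alpha(c)).
\]
I would unfold both sides using the defining relation $\mu_\alpha=\alpha\circ\mu$ and then repeatedly apply the hypothesis that $\alpha$ is an affgebra morphism, i.e.\ $\alpha(\mu(x,y))=\mu(\alpha(x),\alpha(y))$. The left-hand side rewrites as $\alpha(\mu(\alpha(a),\mu(\alpha(b),\alpha(c))))$ and, by a symmetric computation, the right-hand side rewrites as $\alpha(\mu(\mu(\alpha(a),\alpha(b)),\alpha(c)))$. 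These two expressions agree in view of the classical associativity of $\mu$ applied to the triple $(\alpha(a),\alpha(b),\alpha(c))$, and applying $\alpha$ once more at the outermost level yields the desired identity.

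Since the paper adopts the convention that ``Hom-affgebra'' means multiplicative, I would also record that $\alpha\circ\mu_\alpha=\mu_\alpha\circ(\alpha\times\alpha)$; this follows immediately by writing $\alpha\circ\mu_\alpha=\alpha\circ\alpha\circ\mu$ and using the morphism property of $\alpha$ once more to push the inner $\alpha$ through $\mu$.

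No serious obstacle is anticipated: the argument is a direct transport of Yau's twist to the affine framework, and nothing beyond bi-affinity and the morphism property of $\alpha$ needs to be invoked. In particular, the ternary heap operation $\langle-,-,-\rangle$ and the action $\blacktriangleright$ enter only implicitly through the words ``bi-affine'' and ``affine morphism'', so the heap-theoretic machinery developed in Section~2 does not have to be unpacked in the verification.
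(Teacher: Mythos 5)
Your proposal is correct and follows essentially the same route as the paper: unfold $\mu_\alpha=\alpha\circ\mu$, push $\alpha$ through $\mu$ using the affgebra-morphism property, and conclude by the associativity of $\mu$ on the triple $(\alpha(a),\alpha(b),\alpha(c))$, with bi-affinity of $\mu_\alpha$ following from composing the bi-affine $\mu$ with the affine $\alpha$ (which the paper simply verifies in more explicit heap/action notation). Your additional check that $\mu_\alpha$ is multiplicative matches the paper's remark that this is routine.
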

\begin{proof}
Let $\mathcal{A}$ be an associative affgebra. To prove, $\mathcal{A}_{\alpha}$ is a Hom-associative affgebra, we only need to prove that $\mu_{\alpha}$ is bi-affine multiplication and satisfy Hom-associativity property.
For all $a,b,c,d\in \mathcal{A_{\alpha}}$ and any $\zeta\in \mathbb{K}$,
\begin{align*}
\mu_{\alpha}(a,\langle b,c,d\rangle)&=\alpha\circ\mu (a,\langle b,c,d\rangle)\\
&=\alpha\langle \mu(a,b),\mu(a,c),\mu(a,d)\rangle\\
&=\langle \alpha (\mu(a,b)),\alpha(\mu(a,c)),\alpha(\mu(a,d))\rangle\\
&=\langle \alpha \circ\mu(a,b),\alpha\circ\mu(a,c),\alpha\circ\mu(a,d)\rangle\\
&=\langle \mu_{\alpha}(a,b),\mu_{\alpha}(a,c),\mu_{\alpha}(a,d)\rangle.
\end{align*}
Therefore, $\mu_{\alpha}(a,\langle b,c,d\rangle)=\langle \mu_{\alpha}(a,b),\mu_{\alpha}(a,c),\mu_{\alpha}(a,d)\rangle$. Similarly, $\mu_{\alpha}(\langle b,c,d\rangle,a)=\langle \mu_{\alpha}(b,a),\mu_{\alpha}(c,a),\mu_{\alpha}(d,a)\rangle.$
We have 
\begin{align*}
\mu_{\alpha}(a,\zeta\blacktriangleright_{b}c)&=\alpha\circ\mu(a,\zeta\blacktriangleright_{b}c)\\
&=\alpha(\zeta\blacktriangleright_{\mu(a,b)}\mu(a,c))\\
&=\zeta\blacktriangleright_{\alpha(\mu(a,b))}\alpha(\mu(a,c))\\
&=\zeta\blacktriangleright_{\alpha\circ\mu(a,b)}\alpha\circ\mu(a,c)\\
&=\zeta\blacktriangleright_{\mu_{\alpha}(a,b)}\mu_{\alpha}(a,c).
\end{align*}
Therefore, we have $\mu_{\alpha}(a,\zeta\blacktriangleright_{b}c)=\zeta\blacktriangleright_{\mu_{\alpha}(a,b)}\mu_{\alpha}(a,c)$. Similarly, we can prove that, $\mu_{\alpha}(\zeta\blacktriangleright_{b}c,a)=\zeta\blacktriangleright_{\mu_{\alpha}(b,a)}\mu_{\alpha}(c,a)$. Hence, the multiplication $\mu_{\alpha}$ is bi-affine. It is a routine work to check that $\alpha$ is multiplicative. Now, for all $a,b,c\in \mathcal{A_{\alpha}}$, we have
\begin{align*}
\mu_{\alpha}(\alpha(a),\mu_{\alpha}(b,c))&=\mu_{\alpha}(\alpha(a),\alpha\circ\mu(b,c))\\
&=\mu_{\alpha}(\alpha(a),\mu(\alpha(b),\alpha(c)))\\
&=\mu_{\alpha}(\mu(\alpha(a),\alpha(b)),\alpha(c))\\
&=\mu_{\alpha}(\alpha\circ\mu(a,b),\alpha(c))\\
&=\mu_{\alpha}(\mu_{\alpha}(a,b),\alpha(c)).
\end{align*}
Therefore, $\mathcal{A}_{\mu_{\alpha}}$ is a Hom-associativity algebra.
\end{proof}
\begin{Prop}
Let $(\mathcal{A},\{~,~\})$ is a Lie affgebra and $\alpha:\mathcal{A}\rightarrow \mathcal{A}$ is a Lie affgebra morphism, then $(\mathcal{A_{\alpha}},\{~,~\}_{\alpha}=\alpha\circ\{~,~\},\alpha)$ is a Hom-Lie affgebra.
\begin{proof}
The proof is similar to the Proposition (\ref{Prop3.7}).
\end{proof}
\end{Prop}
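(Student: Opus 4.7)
The plan is to follow the template set by Proposition \ref{Prop3.7}, verifying in turn that $\{-,-\}_{\alpha}$ is bi-affine, that $\alpha$ is multiplicative with respect to this twisted bracket, that affine anti-symmetry holds, and finally the affine Hom-Jacobi identity. The first step is a direct transcription of the computation in Proposition \ref{Prop3.7} with the associative multiplication $\mu$ replaced by the Lie bracket $\{-,-\}$: since $\{-,-\}$ is bi-affine and $\alpha$ is an affine morphism (so in particular a heap morphism commuting with the action), for any $a,b,c,d \in \mathcal{A}$ and $\zeta \in \mathbb{K}$ one gets
$$\{a,\langle b,c,d\rangle\}_{\alpha} = \alpha\bigl(\langle \{a,b\},\{a,c\},\{a,d\}\rangle\bigr) = \langle \{a,b\}_{\alpha},\{a,c\}_{\alpha},\{a,d\}_{\alpha}\rangle,$$
and similarly for the second slot and for the scalar action $\zeta \blacktriangleright$.

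Multiplicativity of $\alpha$ for the new bracket is immediate from the hypothesis that $\alpha$ is a Lie affgebra morphism: $\alpha\{a,b\}_{\alpha} = \alpha^{2}\{a,b\} = \alpha\{\alpha(a),\alpha(b)\} = \{\alpha(a),\alpha(b)\}_{\alpha}$. Affine anti-symmetry in $(\mathcal{A}_{\alpha},\{-,-\}_{\alpha})$ is obtained by applying the heap morphism $\alpha$ to the classical affine anti-symmetry identity in $\mathcal{A}$, giving $\langle\{a,b\}_{\alpha},\{a,a\}_{\alpha},\{b,a\}_{\alpha}\rangle = \alpha\{b,b\} = \{b,b\}_{\alpha}$.

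The substantive step is the affine Hom-Jacobi identity. The key observation is that for all $x,y,z \in \mathcal{A}$,
$$\{\alpha(x),\{y,z\}_{\alpha}\}_{\alpha} = \alpha\bigl\{\alpha(x),\alpha\{y,z\}\bigr\} = \alpha^{2}\{x,\{y,z\}\},$$
where the first equality unfolds the two $\alpha$'s in $\{-,-\}_{\alpha}$ and the second uses that $\alpha$ is a Lie affgebra morphism twice. Applying this identity to each of the five summands in the Hom-Jacobi expression, the left-hand side becomes $\alpha^{2}$ applied term-by-term to the five-element heap bracket appearing in the classical affine Jacobi identity for $\mathcal{A}$; since $\alpha^{2}$ is again a heap morphism it distributes over $\langle -,\ldots,-\rangle$, and the identity collapses to $\alpha^{2}\{c,\{c,c\}\} = \{\alpha(c),\{c,c\}_{\alpha}\}_{\alpha}$, which is exactly the required right-hand side.

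The only real bookkeeping obstacle is making sure that every one of the five bracketed summands (including the degenerate ones $\{\alpha(x),\{x,x\}_{\alpha}\}_{\alpha}$) is correctly recognized as $\alpha^{2}$ applied to its untwisted counterpart; once this uniform rewriting is in place, the Hom-Jacobi identity is simply the image of the classical Jacobi identity under the heap morphism $\alpha^{2}$, and no further non-trivial calculation is needed.
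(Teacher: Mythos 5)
Your proof is correct and follows exactly the strategy the paper intends (the paper itself only says ``similar to Proposition \ref{Prop3.7}'' and omits the details): you verify bi-affinity and multiplicativity by pushing $\alpha$ through the heap operations, and reduce the affine Hom-Jacobi identity to the image under the heap morphism $\alpha^{2}$ of the classical affine Jacobi identity via the key rewriting $\{\alpha(x),\{y,z\}_{\alpha}\}_{\alpha}=\alpha^{2}\{x,\{y,z\}\}$. The only remark worth adding is that your argument actually supplies the computation the paper leaves implicit, and does so in a slightly cleaner, uniform way than a term-by-term transcription of Proposition \ref{Prop3.7} would give.
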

\begin{Prop}
Let $(\mathcal{A},\bullet)$ is a pre-Lie affgebra and $\alpha:\mathcal{A}\rightarrow\mathcal{A}$ is pre-Lie affgebra morphism. Now we define new product $a\bullet_{\alpha}b=\alpha(a)\bullet\alpha(b)$, for all $a,b\in \mathcal{A}$. Then $(\mathcal{A}_{\alpha},\bullet_{\alpha},\alpha)$ is a Hom-pre-Lie affgebra.
\end{Prop}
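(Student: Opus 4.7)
The plan is to verify the three requirements for a Hom-pre-Lie affgebra in turn: that $\bullet_\alpha$ is bi-affine, that $\alpha$ is multiplicative with respect to $\bullet_\alpha$, and that the Hom-pre-Lie identity (left or right, according to which identity the original $\bullet$ satisfies) holds. Since $\alpha$ is an affine endomorphism and $\bullet$ is bi-affine, bi-affinity of the composition $(a,b)\mapsto \alpha(a)\bullet\alpha(b)$ follows by plugging $\alpha$ into each slot and reusing Definition~\ref{Def2.3} exactly as in Proposition~\ref{Prop3.7}. Multiplicativity of $\alpha$ for $\bullet_\alpha$ is immediate:
\[
\alpha(a\bullet_\alpha b)=\alpha(\alpha(a)\bullet\alpha(b))=\alpha(\alpha(a))\bullet\alpha(\alpha(b))=\alpha(a)\bullet_\alpha\alpha(b),
\]
where the middle equality uses that $\alpha$ is a pre-Lie affgebra morphism.

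For the Hom-pre-Lie identity my strategy is to collapse every occurrence of $\bullet_\alpha$ by pushing $\alpha$ inwards through the hypothesis $\alpha(x\bullet y)=\alpha(x)\bullet\alpha(y)$. A short bookkeeping calculation will give
\[
(a\bullet_\alpha b)\bullet_\alpha\alpha(c)=(\alpha^{2}(a)\bullet\alpha^{2}(b))\bullet\alpha^{2}(c),
\]
and similarly each of the three terms appearing inside the heap expression on the right of the (left) Hom-pre-Lie identity reduces to a single $\bullet$-product of entries drawn from $\{\alpha^{2}(a),\alpha^{2}(b),\alpha^{2}(c)\}$. The full Hom-pre-Lie identity for $\bullet_\alpha$ therefore becomes nothing but the untwisted left pre-Lie identity for $\bullet$ evaluated at the triple $(\alpha^{2}(a),\alpha^{2}(b),\alpha^{2}(c))$, which holds by assumption. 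The right Hom-pre-Lie case is handled by the same substitution with the three terms permuted accordingly.

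I do not anticipate a serious obstacle; the work lies entirely in tracking the powers of $\alpha$ when $\bullet_\alpha$ is nested and in checking that the heap operation $\langle-,-,-\rangle$ commutes with $\alpha$ term by term. The latter follows because $\alpha$ is an affine, hence heap, homomorphism, so no new identity is needed. This is the affine analogue of Yau's twisting principle already used in Propositions~\ref{Prop3.7} and its Lie counterpart, and once the reductions above are recorded, the proposition follows without further computation.
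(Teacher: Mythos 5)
Your proof is correct and follows essentially the approach the paper intends: the paper states this proposition without a written proof, deferring to the same Yau-twist computation used for Proposition~\ref{Prop3.7}, and your reduction of the twisted identity to the untwisted pre-Lie identity evaluated at $(\alpha^{2}(a),\alpha^{2}(b),\alpha^{2}(c))$ is exactly that computation with $\alpha$ pushed fully inward (equivalently, one peels $\alpha^{2}$ off the outside and uses that $\alpha^{2}$ is a heap morphism). The only minor imprecision is that commutation of $\alpha$ with $\langle-,-,-\rangle$ is what is needed for the bi-affinity check, not for the Hom-pre-Lie identity itself, where you simply substitute the three reduced terms into the heap bracket.
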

From the above discussion, we have the following diagram linking the varies affgebras as follows

\[\begin{tikzcd}
	{\textbf{Pre-Lie Affgebra~~(A, $\bullet$)}} & {\textbf{Lie Affgebra~~(A,~[~,~])}} & {\textbf{Associative Affgebra~~(A,~$\mu$)}} \\
	\\
	\\
	\\
	& {\textbf{Hom-Lie Affgebra~~($A_{\alpha},~\{~,~\}_{\alpha},~\alpha$)}} \\
	\\
	\\
	\\
	{\textbf{Hom-Pre Lie Affgebra~~($A_{\alpha},~ \bullet_{\alpha}, ~\alpha$)}} && {\textbf{Hom-Associative affgebra~~($A_{\mu},~\mu_{\alpha},~\alpha$)}}
	\arrow["{\{a,b\}_{\alpha}=\langle a\bullet b,~b\bullet a,~b \rangle}"above,"{\text{ See the paper \cite{Brz1}}}"below, from=1-1, to=1-2]
	\arrow["{{{a\bullet_{\alpha}b=\alpha\circ(a\bullet b)}}}"{pos=0.2}, shift right=5, from=1-1, to=9-1]
	\arrow["{{{\{a,b\}_{\alpha}=\alpha\circ\{a,b\}}}}", shift right=5, from=1-2, to=5-2]
	\arrow["{\{a,b\}=\langle a\bullet b,~b\bullet a,~b \rangle}"above,"{\text{See the paper \cite{Brz1}}}"below, from=1-3, to=1-2]
	\arrow["{{{\mu_{\alpha}(a,b)=\alpha\circ\mu(a,b)}}}"'{pos=0.6}, shift right=5, from=1-3, to=9-3]
	\arrow["{{{\{a,b\}_{\alpha}=\langle a\bullet_{\alpha}b,~b\bullet_{\alpha}a,~\alpha(b)\rangle}}}"{pos=0.5}, from=9-1, to=5-2]
	\arrow["{{{\{a,b\}_{\alpha}=\langle\mu_{\alpha}(a,b),~\mu_{\alpha}(b,a),~\alpha(b)\rangle}}}", from=9-3, to=5-2]
	\arrow["{{\mu_{\alpha}(a,~b)=a\bullet_{\alpha}b}}"below, from=9-3, to=9-1]
\end{tikzcd}\]

\section{Hom-Affgebra vs Hom-Algebra}
In this section, we illustrate how any Hom-associative affgebra can be retracted to an Hom-associative algebra, any Hom-Lie affgebra can be retracted to a Hom-Lie algebra.
\begin{Thm}\label{Thm10}
Let $(\mathcal{A},\{~,~\},\alpha)$ be a Hom-Lie affgebra. For any $o\in\mathcal{A}$ and $\alpha(o)=o$, then the fibre space $T_{o}\mathcal{A}$ together with unary map $\overrightarrow{\alpha}_{o}(a)=\alpha(a)$ and the bracket
\begin{equation}\label{eq20}
[a,b]_{o}=\langle \{a,b\},\{a,o\},\{o,o\},\{o,b\},o\rangle=\{a,b\}-\{a,o\}+\{o,o\}-\{o,b\},
\end{equation}
is a Hom-Lie algebra. Moreover, any two Hom-Lie $\mathbb{K}$-algebras $(T_{o}\mathcal{A},[~,~]_{o},\overrightarrow{\alpha}_{o})$ and $(T_{e}\mathcal{A},[~,~]_{e},\overrightarrow{\alpha}_{e})$ are mutually isomorphic.
\end{Thm}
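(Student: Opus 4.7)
The plan is to first construct the Hom-Lie algebra on the tangent space $T_{o}\mathcal{A}$, verify its three axioms (bilinearity, antisymmetry, Hom-Jacobi) together with multiplicativity of $\overrightarrow{\alpha}_{o}$, and then establish the isomorphism of fibres via the translation map. Since $\alpha(o)=o$, Remark \ref{Rem1} (applied with the roles suitably interpreted) shows that $\overrightarrow{\alpha}_{o}=\alpha|_{T_{o}\mathcal{A}}$ is a well-defined $\mathbb{K}$-linear endomorphism of $T_{o}\mathcal{A}$.

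Bilinearity of $[-,-]_{o}$ is a direct consequence of bi-affinity of $\{-,-\}$: each slot is a heap morphism compatible with $\blacktriangleright_{o}$, and the constant correction terms $\{a,o\}$, $\{o,b\}$, $\{o,o\}$ in the definition of $[a,b]_{o}$ precisely cancel the affine part, leaving an honest bilinear form on $T_{o}\mathcal{A}$. Direct substitution also yields $[a,o]_{o}=o=[o,b]_{o}$ by heap cancellation. Antisymmetry $[a,b]_{o}+_{o}[b,a]_{o}=o$ follows by expanding in the retract group and invoking the affine anti-symmetry identity $\{x,y\}-\{x,x\}+\{y,x\}=\{y,y\}$ applied to the pairs $(a,b)$, $(a,o)$, $(b,o)$; the six resulting terms cancel in pairs.

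The main obstacle is the Hom-Jacobi identity for $[-,-]_{o}$. The strategy is to expand each $[\overrightarrow{\alpha}_{o}(a),[b,c]_{o}]_{o}$ using bi-affinity of $\{-,-\}$ in both slots, exploiting $\alpha(o)=o$ throughout so that every occurrence of $\alpha$ applied to $o$ simplifies. After cyclically summing, the principal piece $\{\alpha(a),\{b,c\}\}+\{\alpha(b),\{c,a\}\}+\{\alpha(c),\{a,b\}\}$ is resolved by the affine Hom-Jacobi identity, which in additive form at $o$ equates this sum with $\{\alpha(a),\{a,a\}\}+\{\alpha(b),\{b,b\}\}+\{\alpha(c),\{c,c\}\}$. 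The numerous remaining correction terms, of the shapes $\{\alpha(x),\{y,o\}\}$, $\{\alpha(x),\{o,y\}\}$, $\{\alpha(x),\{o,o\}\}$, $\{o,\{y,z\}\}$ and their variants, are handled by applying the same affine Hom-Jacobi identity to triples where one entry is $o$ (repeatedly using $\alpha(o)=o$), together with affine anti-symmetry. The only genuine difficulty is the bookkeeping; structurally every correction cancels.

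Multiplicativity of $\overrightarrow{\alpha}_{o}$ with respect to $[-,-]_{o}$ follows by applying $\alpha$ termwise to the definition of $[a,b]_{o}$, using that $\alpha$ is a heap morphism, multiplicative on $\{-,-\}$, and fixes $o$. Finally, for the isomorphism statement (implicitly with $\alpha(e)=e$ as well), I would use the translation map $\tau_{o}^{e}:a\mapsto\langle a,o,e\rangle$ from Remark \ref{Rem1}, which is already a $\mathbb{K}$-vector space isomorphism $T_{o}\mathcal{A}\to T_{e}\mathcal{A}$, and verify: (i) $\tau_{o}^{e}\circ\overrightarrow{\alpha}_{o}=\overrightarrow{\alpha}_{e}\circ\tau_{o}^{e}$, which follows from $\alpha$ being a heap morphism with $\alpha(o)=o$ and $\alpha(e)=e$; and (ii) $\tau_{o}^{e}[a,b]_{o}=[\tau_{o}^{e}(a),\tau_{o}^{e}(b)]_{e}$, which follows by expanding both sides, distributing $\{-,-\}$ over the heap via bi-affinity, and applying the cancellation/reshuffling rules so that the change of reference point exactly compensates for the change in the constant correction terms.
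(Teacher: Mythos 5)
Your proposal is correct and follows essentially the same route as the paper: linearise $\{-,-\}$ and $\alpha$ at the fixed point $o$, derive antisymmetry and the Hom-Jacobi identity from the additive forms of the affine identities (applying the affine Hom-Jacobi identity both to $(a,b,c)$ and to triples involving $o$, using $\alpha(o)=o$), and obtain the fibre isomorphism via the translation map $\tau_{o}^{e}$. Your explicit observation that $\alpha(e)=e$ must be assumed for the second part, and your check of multiplicativity of $\overrightarrow{\alpha}_{o}$, are points the paper leaves implicit.
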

\begin{proof}
The operation $[~,~]_{o}$ is a bi-linearisation of the bi-affine bracket $\{~,~\}$ through repeated application of the linearisation Formula (\ref{eq1a}) with respect to each argument. Consequently, $[~,~]_{o}$ is a bilinear map on $T_{o}\mathcal{A}$. Similarly from the Remark (\ref{Rem1}), the map $\overrightarrow{\alpha}_{o}$ is a linearisation of $\alpha$ on $T_{o}\mathcal{A}$.\\
For all $a,b,c\in T_{o}\mathcal{A}$, using the addition and subtraction in $T_{o}\mathcal{A}$, the affine anti-symmetry condition and the affine Hom-jacobi identity for the bracket $\{~,~\}$ can be expresses as follows 
\begin{equation}\label{eq6}
\{a,b\}-\{a,a\}+\{b,a\}=\{b,b\},
\end{equation}
and
\begin{equation}\label{eq7}
\{\alpha(a),\{b,c\}\}-\{\alpha(a),\{a,a\}\}+\{\alpha(b),\{c,a\}\}-\{\alpha(b),\{b,b\}\}+\{\alpha(c),\{a,b\}\}=\{\alpha(c),\{c,c\}\}.
\end{equation}

The Equation (\ref{eq6}) implies that $[a,a]_{o}=o$ and $[a,b]_{o}=-[b,a]_{o}$.  That is, $[~,~]_{o}$ is satisfy anti-symmetry condition.
Now to verify the Hom-Jacobi identity, observe that
\begin{align*}
&[\alpha(a),[b,c]_{o}]_{o}\\
&=[\alpha(a),\{b,c\}-\{b,o\}+\{o,o\}-\{o,c\}]_{o}\\
&=[\alpha(a),\{b,c\}]_{o}-[\alpha(a),\{b,o\}]_{o}+[\alpha(a),\{o,o\}]_{o}-[\alpha(a),\{o,c\}]_{o}\\
&=\big{(}\{\alpha(a),\{b,c\}\}-\{\alpha(a),o\}+\{o,o\}-\{o,\{b,c\}\}\big{)}-\big{(}\{\alpha(a),\{b,o\}\}-\{\alpha(a),o\}+\{o,o\}-\{o,\{b,o\}\}\big{)}\\
&~~+\big{(}\{\alpha(a),\{o,o\}\}-\{\alpha(a),o\}+\{o,o\}-\{o,\{o,o\}\}\big{)}-\big{(}\{\alpha(a),\{o,c\}\}-\{\alpha(a),o\}+\{o,o\}-\{o,[\{o,c\}\}\big{)}\\
&=\{\alpha(a),\{b,c\}\}-\{o,\{b,c\}\}-\{\alpha(a),\{b,o\}\}+\{o,\{b,o\}\}+\{\alpha(a),\{o,o\}\}-\{o,\{o,o\}\}-\{\alpha(a),\{o,c\}\}+\{o,\{o,c\}\}.
\end{align*}
Similarly, we have
\begin{align*}
[\alpha(b),[c,a]_{o}]_{o}&=\{\alpha(b),\{c,a\}\}-\{o,\{c,a\}\}-\{\alpha(b),\{c,o\}\}+\{o,\{c,o\}\}+\{\alpha(b),\{o,o\}\}-\{o,\{o,o\}\}-\{\alpha(b),\{o,a\}\}+\{o,\{o,a\}\};\\
[\alpha(c),[a,b]_{o}]_{o}&=\{\alpha(c),\{a,b\}\}-\{o,\{a,b\}\}-\{\alpha(c),\{a,o\}\}+\{o,\{a,o\}\}+\{\alpha(c),\{o,o\}\}-\{o,\{o,o\}\}-\{\alpha(c),\{o,b\}\}+\{o,\{o,b\}\}.
\end{align*}
Applying the Equation (\ref{eq7}) and $\alpha(o)=o$, we have
\begin{align*}
&[\alpha(a),[b,c]_{o}]_{o}+[\alpha(b),[c,a]_{o}]_{o}+[\alpha(c),[a,b]_{o}]_{o}\\
&=\big{(}\{\alpha(a),\{b,c\}\}-\{o,\{b,c\}\}-\{\alpha(a),\{b,o\}\}+\{o,\{b,o\}\}+\{\alpha(a),\{o,o\}\}-\{o,\{o,o\}\}-\{\alpha(a),\{o,c\}\}+\{o,\{o,c\}\}\big{)}\\
&~~+\big{(}\{\alpha(b),\{c,a\}\}-\{o,\{c,a\}\}-\{\alpha(b),\{c,o\}\}+\{o,\{c,o\}\}+\{\alpha(b),\{o,o\}\}-\{o,\{o,o\}\}-\{\alpha(b),\{o,a\}\}+\{o,\{o,a\}\}\big{)}\\
&~~+\big{(}\{\alpha(c),\{a,b\}\}-\{o,\{a,b\}\}-\{\alpha(c),\{a,o\}\}+\{o,\{a,o\}\}+\{\alpha(c),\{o,o\}\}-\{o,\{o,o\}\}-\{\alpha(c),\{o,b\}\}+\{o,\{o,b\}\}\big{)}\\
&=o.
\end{align*}
To prove the last part, we use the translation isomorphism $\tau_{o}^{e}(a)=a-o$ (In the sense of binary operation in $T_{e}\mathcal{A}$) describe in Remark (\ref{Rem1}). Now only need to prove that $\tau_{o}^{e}$ is Hom-Lie algebra morphism,
\begin{align*}
[\tau_{o}^{e}(a),\tau^{e}_{o}(b)]_{e}&=[a-o,b-o]_{e}\\
&=[a,b]_{e}-[a,o]_{e}-[o,b]_{e}+[o,o]_{e}\\
&=\big{(}\{a,b\}-\{a,e\}+\{e,e\}-\{e,b\}\big{)}-\big{(}\{a,o\}-\{a,e\}+\{e,e\}-\{e,o\}\big{)}\\
&~~~-\big{(}\{o,b\}-\{o,e\}+\{e,e\}-\{e,b\}\big{)}+\big{(}\{o,o\}-\{o,e\}+\{e,e\}-\{e,o\}\big{)}\\
&=\{a,b\}-\{a,o\}+\{o,o\}-\{o,b\}\\
&=[a,b]_{o}\\
&=[a,b]_{o}-o~~~~~~~~\text{\{In $T_{o}\mathcal{A}$, $o$ is look like identity.\}}\\
&=\tau_{o}^{e}([a,b]_{o}).
\end{align*}
We also have
\begin{align*}
\overrightarrow{\alpha}_{e}\circ\tau_{o}^{e}(a)&=\overrightarrow{\alpha}_{e}(a-o)\\
&=\alpha(a-o)-\alpha(e)\\
&=\alpha(a-o)\\
&=\overrightarrow{\alpha}_{o}(a-o)\\
&=\overrightarrow{\alpha}_{o}(a)-\overrightarrow{\alpha}_{o}(o)\\
&=\overrightarrow{\alpha}_{o}(a)-o\\
&=\tau_{o}^{e}(\overrightarrow{\alpha}_{o}(a)).
\end{align*}
\end{proof}
\begin{Thm}
Let $(\mathcal{A},\bullet,\alpha)$ be a Hom-associative affgebra. For all $o\in \mathcal{A}$ and $\alpha(o)=o$, then the fibre space $T_{o}\mathcal{A}$ together with unary map $\overrightarrow{\alpha}_{o}(a)=\alpha(a)$ and, for all $a,b\in T_{o}\mathcal{A}$, the multiplication 
\[
a\bullet_{o}b=a\bullet b-a\bullet o+o^{2}-o\bullet b+o,
\]
is a Hom-associative algebra. Furthermore, any two Hom-associative algebras $(T_{o}\mathcal{A},\bullet_{o},\alpha)$ and $(T_{e}\mathcal{A},\bullet_{e},\alpha)$ are mutually isomorphic.
\end{Thm}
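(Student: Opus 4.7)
The plan is to mirror the structure of the proof of Theorem \ref{Thm10}. First, I would observe that the product $\bullet_o$ is the bilinearisation of the bi-affine map $\bullet$, obtained by applying the linearisation formula (\ref{eq1a}) slot by slot, so $\bullet_o$ is bilinear on the $\mathbb{K}$-vector space $T_o\mathcal{A}$. Likewise, $\overrightarrow{\alpha}_o$ is the linearisation of $\alpha$ on $T_o\mathcal{A}$, and under the hypothesis $\alpha(o)=o$ it reduces to the set-theoretic restriction $a\mapsto \alpha(a)$. Multiplicativity of $\overrightarrow{\alpha}_o$ with respect to $\bullet_o$, namely $\overrightarrow{\alpha}_o(a\bullet_o b) = \overrightarrow{\alpha}_o(a)\bullet_o \overrightarrow{\alpha}_o(b)$, then follows from the standing assumption that $\alpha$ is multiplicative on $(\mathcal{A},\bullet)$ combined with $\alpha(o)=o$.

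The heart of the proof is verifying the Hom-associative identity in $T_o\mathcal{A}$:
\[
\overrightarrow{\alpha}_o(a)\bullet_o(b\bullet_o c) \;=\; (a\bullet_o b)\bullet_o \overrightarrow{\alpha}_o(c).
\]
I would expand both sides as heap expressions by applying the definition of $\bullet_o$ twice and using that $\bullet$ is bi-affine (so $\bullet$ distributes over heap brackets in each slot). On the left this gives a long heap expression whose principal entries have the form $\alpha(a)\bullet(b_i\bullet c_j)$ with $b_i,c_j\in\{b,o,c\}$, together with corrections involving $\alpha(a)\bullet o$, $o\bullet(b_i\bullet c_j)$, and $o^2$. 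On the right one obtains a symmetric collection of entries $(a_i\bullet b_j)\bullet\alpha(c_\ell)$ with analogous corrections. Hom-associativity of $\mathcal{A}$, $\alpha(x)\bullet(y\bullet z)=(x\bullet y)\bullet\alpha(z)$, matches the principal entries pairwise; the remaining $o$-dependent terms collapse to the same multiset on both sides using multiplicativity of $\alpha$ and $\alpha(o)=o$ (for instance $\alpha(a)\bullet o=\alpha(a)\bullet\alpha(o)=\alpha(a\bullet o)$, and $o^2=\alpha(o)\bullet\alpha(o)=\alpha(o^2)$). Finally, the abelian-heap cancellation and reshuffling rules identify the two sides.

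For the isomorphism statement, assuming both $o$ and $e$ are fixed by $\alpha$, I would use the translation map $\tau_o^e(a)=\langle a,o,e\rangle$ from Remark \ref{Rem1}, already known to be a heap automorphism and a $\mathbb{K}$-linear isomorphism $T_o\mathcal{A}\cong T_e\mathcal{A}$. It remains to verify that it intertwines the extra data: $\tau_o^e(a\bullet_o b)=\tau_o^e(a)\bullet_e\tau_o^e(b)$, by expanding both sides via the heap definition of the products and applying bi-affinity of $\bullet$ together with $\alpha(o)=o$; and $\overrightarrow{\alpha}_e\circ\tau_o^e=\tau_o^e\circ\overrightarrow{\alpha}_o$, which is the same diagram chase already carried out in Theorem \ref{Thm10}, using only that $\alpha$ is a heap morphism fixing $o$ and $e$.

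The main obstacle I anticipate is the bookkeeping in the Hom-associativity verification: both sides expand into heap expressions with on the order of twenty-five entries, so one must organise the computation so that Hom-associativity of $\mathcal{A}$ is invoked term by term and the residual $o$-terms are paired off using multiplicativity of $\alpha$ and $\alpha(o)=o$, with the final equality secured by cancellation and reshuffling. Once this matching is set up symbolically, no ideas beyond those in the proof of Theorem \ref{Thm10} are required.
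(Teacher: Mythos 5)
Your proposal is correct and follows essentially the same route as the paper: establish bilinearity of $\bullet_{o}$ and linearity of $\overrightarrow{\alpha}_{o}$ via the linearisation formula, expand the Hom-associativity identity, match the principal terms using Hom-associativity of $\mathcal{A}$ together with $\alpha(o)=o$ (so that the $o$-corrections pair off, with one add-and-subtract of $o\bullet\alpha(c)$), and transport the structure between base points by the translation isomorphism $\tau_{o}^{e}$ exactly as in Theorem \ref{Thm10}. The only cosmetic difference is that you phrase the expansion in heap notation while the paper works additively in $T_{o}\mathcal{A}$; these are equivalent.
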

\begin{proof}
The multiplication $\bullet_{o}$ is a bi-linearisation of the bi-affine multiplication $\bullet$ through repeated application of the linearisation Formula (\ref{eq1a}) with respect to each argument. Consequently, $\bullet_{o}$ is a bilinear map on $\mathcal{A}_{o}$. Similarly from the Remark (\ref{Rem1}) the map $\overrightarrow{\alpha}_{o}$ is a linearisation of $\alpha$ map on $\mathcal{A}_{o}$. Now, we only need to prove the Hom-associative property. For all $a,b,c\in \mathcal{A_{\alpha}}$, we have
\begin{align*}
&\alpha(a)\bullet_{o}(b\bullet_{o}c)\\
&=\alpha(a)\bullet_{o}(b\bullet c-b\bullet o+o^{2}-o\bullet c+o)\\
&=\alpha(a)\bullet(b\bullet c-b\bullet o+o^{2}-o\bullet c+o)-\alpha(a)\bullet o+o^{2}-o\bullet(b\bullet c-b\bullet o+o^{2}-o\bullet c+o)+o\\
&=\alpha(a)\bullet(b\bullet c)-\alpha(a)\bullet(b\bullet o)+\alpha(a)\bullet o^{2}-\alpha(a)\bullet(o\bullet c)+\alpha(a)\bullet o-\alpha(a)\bullet o+o^{2}-o\bullet(b\bullet c)\\
&~~~~+o\bullet(b\bullet o)-o\bullet(o^{2})+o\bullet(o\bullet c)+(o\bullet o)+o\\
&=\{\alpha(a)\bullet(b\bullet c)-\alpha(a)\bullet(o\bullet c)+o\bullet(o\bullet c)-o\bullet(b\bullet c)\}-\{\alpha(a)\bullet(b\bullet o)-\alpha(a)\bullet(o\bullet o)+o\bullet(o\bullet o)\\
&~~~-o\bullet(b\bullet o)+o\bullet o\}+o^{2}+o\bullet\alpha( c)-o\bullet \alpha( c)\\
&=\{(a\bullet b)\bullet \alpha(c)-(a\bullet o)\bullet\alpha( c)+(o\bullet o)\bullet \alpha( c)-(o\bullet b)\bullet\alpha( c)\}+o\bullet\alpha(c)-\{(a\bullet b)\bullet \alpha( o)-(a\bullet o)\bullet\alpha( o)\\
&~~~~+(o\bullet o)\bullet\alpha( o)-(o\bullet b)\bullet\alpha( o)+o\bullet\alpha( o)\}+o^{2}+-o\bullet\alpha( c)\\
&=(a\bullet b-a\bullet o+o^{2}-o\bullet b+o)\bullet\alpha(c)-(a\bullet b-a\bullet o+o^{2}-o\bullet b+o)\bullet\alpha( o)+o^{2}-o\bullet\alpha(c)\\
&=(a\bullet b-a\bullet o+o^{2}-o\bullet b+o)\bullet\alpha(c)-(a\bullet b-a\bullet o+o^{2}-o\bullet b+o)\bullet o+o^{2}-o\bullet\alpha(c)\\
&=(a\bullet_{o}b)\bullet_{o}\alpha(c).
\end{align*}
The last part of the Theorem is similarly to the prove of the Theorem (\ref{Thm10}). 
\end{proof}
\begin{Thm}
Let $(\mathcal{L},[-,-],\alpha)$ be a Hom-Lie algebra. Then for all $r\in \mathcal{L}$, and with the same unary map $\alpha$ along with the affine space operation
$$
\langle a,b,c\rangle=a-b+c,~~~~~~~~\xi\blacktriangleright_{a}b=\xi b+(1-\xi )a,~~~~~~~\text{for all $a,b,c\in \mathcal{L}$, $\xi\in \mathbb{K}$},
$$
and the affine bracket
$$
\{a,b\}=[a,b]+\alpha(b)+r, ~~~~\text{for all $a,b\in \mathcal{L}$},
$$
$\mathcal{L}$ is a Hom-Lie affgebra.
\end{Thm}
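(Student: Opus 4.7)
The plan is to check, in order, that the given operations equip $\mathcal{L}$ with an affine $\mathbb{K}$-space structure; that $\alpha$ is affine and $\{-,-\}$ is bi-affine; and that the affine anti-symmetry and affine Hom-Jacobi identities hold. The affine-space part is the standard vector-space-to-affine-space construction (cf.\ Remark~\ref{re1} and equation~(\ref{eq1})), so it is automatic. Once that is in place, a self-map of such a space is affine iff it is $\mathbb{K}$-linear plus a translation; hence $\alpha$ is affine, and each slot of $\{a,b\}=[a,b]+\alpha(b)+r$ is linear plus a constant, so $\{-,-\}$ is bi-affine. These preliminaries cost essentially nothing.

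For the affine anti-symmetry, I would unpack $\langle x,y,z\rangle = x-y+z$ in the underlying abelian group and compute
\[
\{a,b\} - \{a,a\} + \{b,a\} \;=\; [a,b] + [b,a] - [a,a] + \alpha(b) + r,
\]
which, using $[a,a]=0$ and anti-symmetry of $[-,-]$, collapses to $\alpha(b)+r = \{b,b\}$.

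The bulk of the work lies in the affine Hom-Jacobi identity. Using $\{a,a\}=\alpha(a)+r$ and the multiplicativity $\alpha[x,y]=[\alpha(x),\alpha(y)]$ (implicit in the multiplicative Hom-Lie convention adopted throughout the paper), every expression of the form $\{\alpha(x),\{y,z\}\}$ splits into seven summands
\[
[\alpha(x),[y,z]] + [\alpha(x),\alpha(z)] + [\alpha(x),r] + \alpha[y,z] + \alpha^{2}(z) + \alpha(r) + r.
\]
Substituting into the alternating five-fold heap expression on the left and sorting the resulting terms into three groups, I expect: (i) the three Jacobi contributions $[\alpha(a),[b,c]]+[\alpha(b),[c,a]]+[\alpha(c),[a,b]]$ vanish by the Hom-Jacobi identity of $\mathcal{L}$; (ii) the three mixed brackets $[\alpha(a),\alpha(c)]+[\alpha(b),\alpha(a)]+[\alpha(c),\alpha(b)]$ combine, via multiplicativity, with the three $\alpha$-of-bracket terms $\alpha[b,c]+\alpha[c,a]+\alpha[a,b]$ into three antisymmetric cancellations $[x,y]+[y,x]=0$; (iii) the residual $[\alpha(\cdot),r]$, $\alpha^{2}(\cdot)$, $\alpha(r)$, and $r$ pieces from the five summands telescope, leaving precisely $[\alpha(c),r]+\alpha^{2}(c)+\alpha(r)+r = \{\alpha(c),\{c,c\}\}$. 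The main obstacle is simply the bookkeeping of these $35$ summands: the cancellation is only transparent after grouping as above, and the key algebraic input beyond Lie anti-symmetry is multiplicativity of $\alpha$, which is exactly what forces group~(ii) to collapse.
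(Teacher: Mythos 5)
Your proposal is correct and follows essentially the same route as the paper's proof: both expand each $\{\alpha(x),\{y,z\}\}$ into the seven summands $[\alpha(x),[y,z]]$, $[\alpha(x),\alpha(z)]$, $[\alpha(x),r]$, $\alpha[y,z]$, $\alpha^{2}(z)$, $\alpha(r)$, $r$ and then cancel term by term, with the anti-symmetry check identical. You are also right to flag multiplicativity of $\alpha$ as the key input making your group (ii) collapse --- the paper uses $\alpha[x,y]=[\alpha(x),\alpha(y)]$ tacitly at exactly that step.
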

\begin{proof}
Let $(\mathcal{L},[-,-],\alpha)$ be a Hom-Lie algebra. By the definition that the bracket $[-,-]$ and unary map $\alpha$  are bi-linear and linear respectively. It is well know result that, every linear map is an affine map. So, the affine bracket $\{-,-\}$ is a bi-affine map as this is sum of linear and constant part. Now to prove affine anti-symmetry, for any $a,b\in \mathcal{L}$, we have 
\begin{align*}
\langle\{a,b\},\{a,a\},\{b,a\}\rangle&=\{a,b\}-\{a,a\}+\{b,a\}\\
&=[a,b]+\alpha(b)+r-[a,a]-\alpha(a)-r+[b,a]+\alpha(a)+r\\
&=\alpha(b)+r\\
&=\{b,b\}.
\end{align*}
Hence, the affine anti-symmetry property holds.
To show the affine Hom-Jacodi identity of the affine bracket $\{-,-\}$, for all $a,b,c\in \mathcal{L}$, we get
\begin{align*}
&~~~~\langle\{\alpha(a),\{b,c\}\},\{\alpha( a),\{a,a\}\},\{\alpha(b),\{c,a\}\},\{\alpha(b),\{b,b\}\},\{\alpha(c),\{a,b\}\}\rangle\\
&=\{\alpha(a),\{b,c\}\}-\{\alpha( a),\{a,a\}\}+\{\alpha(b),\{c,a\}\}-\{\alpha(b),\{b,b\}\}+\{\alpha(c),\{a,b\}\}\\
&=[\alpha(a),[b,c]]+[\alpha(a),\alpha(c)]+[\alpha(a),r]+\alpha[b,c]-\alpha^{2}(c)+\alpha(r)+r-[\alpha(a),r]-\alpha^{2}(a)-\alpha(r)+r\\
&~~~~~[\alpha(b),[c,a]]+[\alpha(b),\alpha(a)]+[\alpha(b),r]+\alpha[c,a]+\alpha^{2}(a)+\alpha(r)+r-[\alpha(b),r]-\alpha^{2}(b)-\alpha(r)-r\\
&~~~~~[\alpha(c),[a,b]]+[\alpha(c),\alpha(b)]+[\alpha(c),r]+\alpha[a,b]+\alpha^{2}(b)+\alpha(r)+r\\
&=[\alpha(c),r]+\alpha^{2}(c)+\alpha(r)+r\\
&=\{\alpha(c),\{c,c\}\}.
\end{align*}
Thus, the affine Hom-Jacobi identity holds.
\end{proof}

The next theorem is more general then above theorem. Which show a close connection between generalized derivation of Hom-Lie algebra and Hom-Lie affgebras.

\begin{Thm}\label{Thm6.4}
Let $\mathcal{L} = (\mathcal{L}, [-,-], \alpha)$ be a Hom-Lie algebra, and let $\kappa$ and $\lambda$ be linear endomorphisms of $\mathcal{L}$ satisfying $\lambda\alpha = \alpha\lambda$ and $\kappa\alpha = \alpha\kappa$, such that for all $a,b \in \mathcal{L}$,
\begin{equation}\label{eq17}
\lambda([a,b]) = [\lambda(a), \alpha(b)] - [\alpha(a), \kappa(b)] + [\alpha(a), \lambda(b)].
\end{equation}
Then, for any $r \in \mathcal{L}$, the affine space with the same unary map $\alpha$, equipped with the operations
\[
\langle a, b, c \rangle = a - b + c, \qquad 
\xi \blacktriangleright_{a} b = \xi b + (1 - \xi)a, \qquad 
\text{for all } a,b,c \in \mathcal{L},~ \xi \in \mathbb{K},
\]
and the affine bracket
\begin{equation}\label{eq18}
\{a,b\} = [a,b] + \kappa(a) + \lambda(b - a) + r, \qquad \text{for all } a,b \in \mathcal{L},
\end{equation}
forms a Hom-Lie affgebra, denoted by $\mathcal{A}(\mathcal{L}; \alpha, \kappa, \lambda, r)$.

Moreover, for every $o \in \mathcal{A}$ satisfying $\alpha(o) = o$, we have
\[
T_{o}\mathcal{A}(\mathcal{L}; \alpha, \kappa, \lambda, r) \cong \mathcal{L}.
\]
Conversely, for any Hom-Lie affgebra $(\mathcal{A}, \{\, , \,\}, \alpha)$ and any $o \in \mathcal{A}$ such that $\alpha(o) = o$, there exist linear maps $\kappa, \lambda$ and an element $r$ necessarily satisfying \eqref{eq17}, such that
\[
\mathcal{A} = \mathcal{A}(T_{o}\mathcal{A}, \alpha, \kappa, \lambda, r).
\]
\end{Thm}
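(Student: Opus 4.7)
The plan is to handle the three claims in order: (i) the bracket (\ref{eq18}) defines a Hom-Lie affgebra; (ii) the fibre at any $\alpha$-fixed $o$ is isomorphic to $\mathcal{L}$; and (iii) conversely, every Hom-Lie affgebra arises this way, with $(\kappa,\lambda,r)$ necessarily satisfying (\ref{eq17}).

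For (i), bi-affinity of $\{-,-\}$ is immediate, since the right-hand side is a bilinear term $[a,b]$ plus the slot-wise linear terms $\kappa(a)$ and $\lambda(b-a)$ plus the constant $r$. Affine anti-symmetry reduces by direct cancellation to $\{a,b\}-\{a,a\}+\{b,a\}=\kappa(b)+r=\{b,b\}$, using $[a,b]+[b,a]=0$ and linearity of $\lambda,\kappa$. The substantive step is the affine Hom-Jacobi identity. I would exploit the factorisation $\{x,y\}-\{x,z\}=[x,y-z]+\lambda(y-z)$ with $z=\{a,a\}=\kappa(a)+r$, together with $\{b,c\}-\{a,a\}=[b,c]+\kappa(b-a)+\lambda(c-b)$, to expand each cyclic summand into six ingredients. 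After summing cyclically, the iterated-bracket terms cancel by the Hom-Jacobi identity of $(\mathcal{L},[-,-],\alpha)$; the $\lambda\kappa(b-a)$ and $\lambda^{2}(c-b)$ terms cancel by telescoping; and the remainder collapses when (\ref{eq17}) is applied to each of $\lambda[a,b]$, $\lambda[b,c]$, $\lambda[c,a]$ together with Lie anti-symmetry. A short preliminary observation makes the argument work: setting $a=b$ in (\ref{eq17}) forces the hidden identity $[\alpha(a),\kappa(a)]=0$, which is precisely what cancels the unmatched ``diagonal'' contributions $\sum_{\mathrm{cyc}}[\alpha(a),\kappa(a)]$ that arise from the $\kappa(b-a)$ slot.

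For (ii), Theorem~\ref{Thm10} equips $T_o\mathcal{A}$ with the fibre Hom-Lie bracket $[a,b]_o=\{a,b\}-\{a,o\}+\{o,o\}-\{o,b\}$ and unary map $\overrightarrow{\alpha}_o$. Substituting (\ref{eq18}), the $\kappa(a)$ and $r$ contributions telescope, and the three $\lambda$-terms sum to $\lambda((b-a)-(o-a)-(b-o))=\lambda(0)=0$, leaving $[a,b]_o=[a,b]-[a,o]-[o,b]$ (in $\mathcal{L}$-arithmetic, modulo the shift of origin). The translation $\phi\colon T_o\mathcal{A}\to\mathcal{L}$, $\phi(x)=x-o$, from Remark~\ref{Rem1} is a $\mathbb{K}$-linear bijection, and using bilinearity of $[-,-]$ together with $[o,o]=0$ one verifies $\phi([a,b]_o)=[\phi(a),\phi(b)]$, while $\alpha(o)=o$ gives $\phi\circ\overrightarrow{\alpha}_o=\alpha\circ\phi$.

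For (iii), the canonical decomposition of a bi-affine map on a vector space as bilinear $+$ slot-linear $+$ constant dictates the definitions. Working in $T_o\mathcal{A}$ (with $o$ as zero), I set
\[
r:=\{o,o\},\qquad \lambda(b):=\{o,b\}-\{o,o\},\qquad \kappa(a):=\{a,o\}+\{o,a\}-2\{o,o\},
\]
and take $[-,-]$ on $T_o\mathcal{A}$ to be $[-,-]_o$. Linearity of $\lambda,\kappa$ is automatic because each is the linearisation at $o$ of an affine endomorphism; $\lambda\alpha=\alpha\lambda$ and $\kappa\alpha=\alpha\kappa$ follow from multiplicativity of $\alpha$ and $\alpha(o)=o$. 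Reversing the expansion from (i) verifies $[a,b]_o+\kappa(a)+\lambda(b-a)+r=\{a,b\}$, so $\mathcal{A}=\mathcal{A}(T_o\mathcal{A};\alpha,\kappa,\lambda,r)$. To extract (\ref{eq17}), I reuse the cyclic expansion of part (i), which rewrites the affine Hom-Jacobi identity as a trilinear relation of the schematic form $\lambda([a,b]+[b,c]+[c,a])=B'-A'-C+D$, where $A',B',C,D$ are cyclic sums in $\alpha,\kappa,\lambda$; specialising $c=o$ collapses $A',B',C$ to the two bilinear forms in (\ref{eq17}) and kills the $D$ terms (after first isolating $[\alpha(a),\kappa(a)]=0$ by letting $a=b$ in the same identity), yielding (\ref{eq17}) directly. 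The main obstacle is this cyclic-sum expansion, performed forward in (i) and inverted in (iii): it requires precise bookkeeping of six cyclic families of three terms, but the organising observation that the affine Hom-Jacobi identity is equivalent to (\ref{eq17}) alone---its other consequence $[\alpha(a),\kappa(a)]=0$ being built into (\ref{eq17})---keeps the proof manageable.
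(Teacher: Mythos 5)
Your proposal is correct and follows essentially the same route as the paper: direct verification of bi-affinity and anti-symmetry, cyclic expansion of the affine Hom-Jacobi identity reduced to \eqref{eq17} together with the hidden identity $[\alpha(a),\kappa(a)]=0$ (obtained by the same specialisations $b=c$ and $c=0$), and the converse via $r=\{o,o\}$, $\lambda(a)=\{o,a\}-\{o,o\}$, and a $\kappa$ that coincides with the paper's $\{a,a\}-\{o,o\}$ by affine anti-symmetry. The only cosmetic difference is your explicit telescoping of the $\lambda\kappa$ and $\lambda^{2}$ terms, which the paper performs implicitly.
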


\begin{proof}
Let $(\mathcal{L},[-,-],\alpha)$ be a Hom-Lie algebra. Now to prove affine anti-symmetry, take any $a,b\in \mathcal{L}$, we have 
\begin{align*}
\langle\{a,b\},\{a,a\},\{b,a\}&=\{a,b\}-\{a,a\}+\{b,a\}\\
&=[a,b]+\kappa(a)+\lambda(b-a)+r-\kappa(a)-r+[b,a]+\kappa(b)+\lambda(a-b)+r\\
&=\kappa(b)+r\\
&=\{b,b\}.
\end{align*}
Hence, the affine anti-symmetry property holds.
To show the affine Hom-Jacodi identity of the affine bracket $\{-,-\}$, for all $a,b,c\in \mathcal{L}$, let us first note that
\begin{align*}
\{\alpha(a),\{b,c\}\}-\{\alpha(a),\{a,a\}\}=&[\alpha(a),[b,c]]+[\alpha(a),\kappa(b)]+[\alpha(a),\lambda(c)]-[\alpha(a),\lambda(b)]\\
&+\lambda[b,c]+\lambda\circ\kappa(b-a)+\lambda^{2}(c-b)-[\alpha(a),\kappa(a)]
\end{align*}

Now, the affine bracket defined by the formula (\ref{eq18}) satisfies affine Hom-Jacobi identity if and only if
\begin{align*}
0&=\langle\{\alpha(a),\{b,c\}\},\{\alpha( a),\{a,a\}\},\{\alpha(b),\{c,a\}\},\{\alpha(b),\{b,b\}\},\{\alpha(c),\{a,b\}\}\rangle-\{\alpha(c),\{c,c\}\}\\
&=\{\alpha(a),\{b,c\}\}-\{\alpha( a),\{a,a\}\}+\{\alpha(b),\{c,a\}\}-\{\alpha(b),\{b,b\}\}+\{\alpha(c),\{a,b\}\}-\{\alpha(c),\{c,c\}\}\\
&=[\alpha(a),[b,c]]+[\alpha(a),\kappa(b)]+[\alpha(a),\lambda(c)]-[\alpha(a),\lambda(b)]+\lambda[b,c]-[\alpha(a),\kappa(a)]\\
&~~~+[\alpha(b),[c,a]]+[\alpha(b),\kappa(c)]+[\alpha(b),\lambda(a)]-[\alpha(b),\lambda(c)]+\lambda[c,a]-[\alpha(b),\kappa(b)]\\
&~~~+[\alpha(c),[a,b]]+[\alpha(c),\kappa(a)]+[\alpha(c),\lambda(b)]-[\alpha(c),\lambda(a)]+\lambda[a,b]-[\alpha(c),\kappa(c)]\\
&=[\alpha(a),\kappa(b)]+[\alpha(a),\lambda(c)]-[\alpha(a),\lambda(b)]+\lambda[b,c]-[\alpha(a),\kappa(a)]\\
&~~~+[\alpha(b),\kappa(c)]+[\alpha(b),\lambda(a)]-[\alpha(b),\lambda(c)]+\lambda[c,a]-[\alpha(b),\kappa(b)]\\
&~~~+[\alpha(c),\kappa(a)]+[\alpha(c),\lambda(b)]-[\alpha(c),\lambda(a)]+\lambda[a,b]-[\alpha(c),\kappa(c)].\\
\end{align*}
For a particular case, when b=c, the equality hold if and only if $[\alpha(b-a),\kappa(b-a)]=0$, that is
\begin{equation}\label{eq19}
[\alpha(a),\kappa(a)]=0,~~~~~~~~\text{for all a} \in \mathcal{L}.
\end{equation}
Next, put $c=0$ and in view of (\ref{eq19}), we get the necessity of (\ref{eq17}). Also, (\ref{eq19}) follows from (\ref{eq17}), by setting $b=a$ in (\ref{eq17}). Therefore, the bracket (\ref{eq18}) satisfies the affine Hom-Jacobi identity.\\
Proof of the part $T_{o}\mathcal{A}(\mathcal{L};\alpha,\kappa,\lambda,r)\cong\mathcal{L}$, we have already done in Theorem (\ref{Thm10}).

Conversely, to each element $o\in \mathcal{A}$ such that $\alpha(o)=o$, we assign linear maps $\kappa,\lambda:T_{o}\mathcal{A}\rightarrow T_{o}\mathcal{A}$ and an element $r\in \mathcal{A}$ which satisfying the condition (\ref{eq17}). A comparison between (\ref{eq18}) and (\ref{eq20}) suggest the following natural choices for these data
\begin{align*}
r&=\{o,o\},\\
\lambda:T_{o}\mathcal{A}\rightarrow T_{o}\mathcal{A},&~~~~~a\mapsto\{o,a\}-\{o,o\},\\
\kappa:T_{o}\mathcal{A}\rightarrow T_{o}\mathcal{A},&~~~~~a\mapsto\{a,a\}-\{o,o\}.\\
\end{align*}

Since $\{o,-\}:\mathcal{A} \to \mathcal{A}$ is an affine map (fixing the base point $o$), so $\lambda$ is linear map. Starting from the identity obtained by affine anti-symmetry, we have
\[
\kappa(a) = \{a,a\} - \{o,o\}
= (\{o,a\} - \{o,o\}) + (\{a,o\} - \{o,o\})
= \lambda(a) + (\{a,o\} - \{o,o\}).
\]
Thus, $\kappa$ is the sum of two maps, $\lambda$ (already linear) and $a \mapsto \{a,o\} - \{o,o\}$.

Since $\{-,o\}$ is affine in the first argument, the map $a \mapsto \{a,o\} - \{o,o\}$ is linear. Therefore, $\kappa$ is linear map. By using $\alpha(o)=o$, we can easily verify that $\lambda\alpha=\alpha\lambda$ and $\kappa\alpha=\alpha\kappa$.

We want to check that, with these definitions, the identity (\ref{eq18}) hold. Starting from the right-hand side and using the bracket formula (\ref{eq20})
\[
[a,b] = \{a,b\} - \{a,o\} + \{o,o\} - \{o,b\},
\]
together with
\[
\kappa(a) = \{a,a\} - \{o,o\}, \qquad 
\lambda(b-a) = \{o,b-a\} - \{o,o\}.
\]

We compute
\[
\begin{aligned}
[a,b] + \kappa(a) + \lambda(b-a) + r
&= (\{a,b\} - \{a,o\} + \{o,o\} - \{o,b\}) 
  + (\{a,a\} - \{o,o\}) 
  + (\{o,b\} - \{o,a\}) 
  + \{o,o\} \\
&= \{a,b\} + (\{a,a\} - \{a,o\} - \{o,a\}) + \{o,o\}.
\end{aligned}
\]

Now applying affine anti-symmetry with $b = o$, we get $\{a,o\} - \{a,a\} + \{o,a\} = \{o,o\}
\quad \Longrightarrow \quad
\{a,a\} - \{a,o\} - \{o,a\} = -\{o,o\}.
$ Then substituting this value, we find
\[
[a,b] + \kappa(a) + \lambda(b-a) + r = \{a,b\}.
\]
Hence, the right-hand side equals the left-hand side, and (\ref{eq18}) is identically satisfied for our choice of $r, \lambda, \kappa$. From the first part of the proof ensures that the condition (\ref{eq17}) is fulfilled.
\end{proof}
Motivating to the concept of generalized derivation of Lie algebra in the Paper \cite{Leg1}, similarly, we develop the concept of generalized derivation of Hom-Lie algebra in this section.
\begin{Def}
Let $\mathcal{L} = (\mathcal{L}, [\, , \,], \alpha)$ be a Hom-Lie algebra.  
A linear map $\lambda$ on $\mathcal{L}$ is called a \emph{generalized derivation} of $\mathcal{L}$ if there exist two linear maps $\lambda', \lambda''$ on $\mathcal{L}$ such that, for all $a,b \in \mathcal{L}$,
\begin{equation}\label{eq21}
[\lambda(a), \alpha(b)] + [\alpha(a), \lambda'(b)] = \lambda''([a,b]),
\end{equation}
and
\begin{equation}\label{eq22}
\lambda \circ \alpha = \alpha \circ \lambda, 
\qquad 
\lambda' \circ \alpha = \alpha \circ \lambda', 
\qquad 
\lambda'' \circ \alpha = \alpha \circ \lambda''.
\end{equation}
\end{Def}

We denote by $\Delta(\mathcal{L})$ the set of triples $(\lambda, \lambda', \lambda'')$ of linear endomorphisms of $\mathcal{L}$ that satisfy the condition (\ref{eq21}) and (\ref{eq22}). By slightly bending this terminology, in what follows we will refer either to $\lambda$ or the triple $(\lambda, \lambda', \lambda'')$ as a generalized derivation. We can trivially verify that, $\lambda$ is a $\alpha$-\emph{derivation} of $\mathcal{L}$ if and only if $(\lambda, \lambda, \lambda) \in \Delta(\mathcal{L})$.  

A \emph{quasi-centroid} of $\mathcal{L}$, denoted by $QC(\mathcal{L})$, is a linear map 
$\kappa$ on $\mathcal{L}$ such that $(\kappa, -\kappa, 0) \in \Delta(\mathcal{L})$. 
That is, $\kappa \in QC(\mathcal{L})$ if and only if, for all $a, b \in \mathcal{L}$,
\begin{equation}\label{eq23}
[\kappa(a), \alpha(b)] = [\alpha(a), \kappa(b)]
\quad \text{and} \quad 
(\kappa \circ \alpha)(a) = (\alpha \circ \kappa)(a).
\end{equation}

Since we assume that $\mathrm{char}\,\mathbb{K} \neq 2$, the condition (\ref{eq23}) is equivalent to (\ref{eq19}). Next, we define the \emph{centroid} of $\mathcal{L}$, denoted by $C(\mathcal{L})$, is the space of linear endomorphisms $\kappa$ such that $(0, \kappa, \kappa) \in \Delta(\mathcal{L})$, or equivalently by the anti-symmetry of the Lie bracket, $(\kappa, 0, \kappa) \in \Delta(\mathcal{L})$.  
That is, $\kappa \in C(\mathcal{L})$ if and only if, for all $a, b \in \mathcal{L}$,
\begin{equation}
\kappa([a, b]) = [\kappa(a),\alpha(b)] = [\alpha(a), \kappa(b)]~~~\text{and}~~~(\kappa\circ\alpha)(a)=(\alpha\circ\kappa)(a).
\end{equation}
Obviously $C(\mathcal{L}) \subseteq QC(\mathcal{L})$.

\begin{Lemma}\label{Lemma6.6}
Let $\mathcal{L} = (\mathcal{L}, [\, , \,], \alpha)$ be a Hom-Lie algebra, and let $\lambda$ and $\delta$ be linear maps on $\mathcal{L}$.  
Set $\kappa := \lambda - \delta$. Then the following statements are equivalent:
\begin{enumerate}
\item[(i)] $(\delta, \lambda, \lambda) \in \Delta(\mathcal{L});$
\item[(ii)] $\lambda$ and $\kappa$ satisfy \eqref{eq17}, and moreover, $\lambda \circ \alpha = \alpha \circ \lambda$ and $\kappa \circ \alpha = \alpha \circ \kappa$.
\end{enumerate}
\end{Lemma}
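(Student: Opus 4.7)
The plan is to set $\delta = \lambda - \kappa$ throughout and reduce the equivalence to a direct comparison of two bracket identities. Substituting $\delta(a) = \lambda(a) - \kappa(a)$ into the defining identity for $(\delta,\lambda,\lambda) \in \Delta(\mathcal{L})$, namely $\lambda([a,b]) = [\delta(a),\alpha(b)] + [\alpha(a),\lambda(b)]$, yields
\[
\lambda([a,b]) = [\lambda(a),\alpha(b)] - [\kappa(a),\alpha(b)] + [\alpha(a),\lambda(b)],
\]
which differs from \eqref{eq17} only in that $[\kappa(a),\alpha(b)]$ appears where \eqref{eq17} carries $[\alpha(a),\kappa(b)]$. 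Hence the entire content of the lemma is the verification of the quasi-centroid-type symmetry $[\kappa(a),\alpha(b)] = [\alpha(a),\kappa(b)]$ for all $a,b \in \mathcal{L}$.

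I would extract this symmetry by a polarisation trick already hinted at in the proof of Theorem \ref{Thm6.4}. In either direction, setting $b=a$ in the relevant identity collapses, via the anti-symmetry of $[-,-]$ and $\lambda([a,a])=0$, to the single condition $[\kappa(a),\alpha(a)] = 0$ for every $a \in \mathcal{L}$ (the two $\lambda$-terms cancel against each other, leaving only the $\kappa$-term). Replacing $a$ by $a+b$, expanding by bilinearity, and invoking $\mathrm{char}\,\mathbb{K} \neq 2$, one obtains
\[
[\kappa(a),\alpha(b)] + [\kappa(b),\alpha(a)] = 0,
\]
which by anti-symmetry is exactly $[\kappa(a),\alpha(b)] = [\alpha(a),\kappa(b)]$. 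With this in hand, the two bracket identities coincide, and (i) and (ii) pass to each other by the substitution $\delta \leftrightarrow \lambda - \kappa$.

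The twisting-compatibility half is immediate: from $\kappa = \lambda - \delta$ we get $\kappa\alpha - \alpha\kappa = (\lambda\alpha - \alpha\lambda) - (\delta\alpha - \alpha\delta)$, so once $\lambda\alpha = \alpha\lambda$ is assumed (which is part of both (i) and (ii)), the conditions $\delta\alpha = \alpha\delta$ and $\kappa\alpha = \alpha\kappa$ are equivalent. The only real step that requires any attention is the polarisation argument, and the only place where a hypothesis is truly used is $\mathrm{char}\,\mathbb{K} \neq 2$ --- exactly as flagged in the discussion preceding the lemma, where this is noted to equate \eqref{eq23} with the identity $[\alpha(a),\kappa(a)]=0$ appearing in \eqref{eq19}. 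The hardest aspect is therefore conceptual rather than computational: recognising that the asymmetric appearance of $\kappa$ in \eqref{eq17} is forced to be symmetric by the diagonal specialisation $b=a$.
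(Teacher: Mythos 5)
Your proposal is correct and follows essentially the same route as the paper: both reduce the equivalence to the quasi-centroid symmetry $[\kappa(a),\alpha(b)]=[\alpha(a),\kappa(b)]$, obtained by specialising $b=a$ to get $[\kappa(a),\alpha(a)]=0$ and then polarising (the paper's converse direction instead uses an $a\leftrightarrow b$ swap combined with anti-symmetry, but that is a cosmetic difference). One small nitpick: in the direction you actually polarise (from the diagonal identity to the symmetric one) $\mathrm{char}\,\mathbb{K}\neq 2$ is not needed --- it is only required for the reverse implication between \eqref{eq19} and \eqref{eq23} --- so your argument in fact uses slightly weaker hypotheses than you claim.
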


\begin{proof}
\textbf{(i) $\Rightarrow$ (ii)}
Assume first that $(\delta, \lambda, \lambda) \in \Delta(\mathcal{L})$.  
By definition of $\Delta(\mathcal{L})$, we have
\begin{equation*}
[\delta(a),\alpha(b)] + [\alpha(a), \lambda(b)] = \lambda([a,b]),~~~\text{and}~~~\delta\circ\alpha=\alpha\circ\delta,~~\lambda\circ\alpha=\alpha\circ\lambda \qquad \forall a,b \in \mathcal{L}.
\end{equation*}
Set $\kappa := \lambda - \delta$.  Then $\delta = \lambda - \kappa$, and substituting the value. we get
\[
[(\lambda - \kappa)(a),\alpha(b)] + [\alpha(a), \lambda(b)] = \lambda([a,b])~~~\text{and}~~~(\lambda-\kappa)\circ\alpha=\alpha\circ(\lambda-\kappa),~~\lambda\circ\alpha=\alpha\circ\lambda.
\]
Simplifying gives
\[
[\lambda(a),\alpha(b)] - [\kappa(a),\alpha(b)] + [\alpha(a), \lambda(b)] = \lambda([a,b])~~~\text{and}~~~\lambda\circ\alpha-\kappa\circ\alpha=\alpha\circ\lambda-\alpha\circ\kappa,~~\lambda\circ\alpha=\alpha\circ\lambda.
\]
Set $a=b$, then $[\kappa(a),\alpha(a)]=0$. So by equation (\ref{eq23})
\[
[\lambda(a),\alpha(b)] - [\alpha(a),\kappa(b)] + [\alpha(a), \lambda(b)] = \lambda([a,b])~~~\text{and}~~~\kappa\circ\alpha=\alpha\circ\kappa,~~\lambda\circ\alpha=\alpha\circ\lambda.
\]
Hence, $\lambda$ and $\kappa$ satisfy (\ref{eq17}) and $\kappa\circ\alpha=\alpha\circ\kappa,~~\lambda\circ\alpha=\alpha\circ\lambda$.

\textbf{(i)$\Leftarrow$ (ii)} Conversely, suppose that $\lambda$ and $\kappa$ satisfy (\ref{eq17}), and $\kappa\circ\alpha=\alpha\circ\kappa,~~\lambda\circ\alpha=\alpha\circ\lambda$. That is; $[\lambda(a),\alpha(b)] - [\alpha(a),\kappa(b)] + [\alpha(a), \lambda(b)] = \lambda([a,b])$.
Replacing $\kappa = \lambda - \delta$, we have
\[
[\lambda(a),\alpha(b)] - [\alpha(a),\lambda(b)-\delta(b)] + [\alpha(a), \lambda(b)] = \lambda([a,b]),~~~and~~~\delta\circ\alpha=\alpha\circ\delta,~~\lambda\circ\alpha
=\alpha\circ\lambda.\]
which simplifies to
\[
[\lambda(a),\alpha(b)]+[\alpha(a),\delta(b)]=
\lambda([a,b]),~~~and~~~\delta\circ\alpha=\alpha\circ\delta,~~\lambda\circ\alpha
=\alpha\circ\lambda.\\
\]
Also by anti-symmetry of Lie bracket, $[\delta(a),\alpha(b)]+[\alpha(a),\lambda(b)]=
\lambda([a,b])$. 
Thus $(\delta, \lambda, \lambda) \in \Delta(\mathcal{L})$.  Therefore, (i) and (ii) are equivalent.
\end{proof}
The assertion of Lemma (\ref{Lemma6.6}) allows one to sate Theorem (\ref{Thm6.4}) in the following equivalent form
\begin{Coro}
There is a one-to-one correspondence between Hom-Lie affgebras with Hom-Lie algebra fibre $\mathcal{L}$ and generalized derivations $(\delta, \lambda, \lambda)$ of $\mathcal{L}$, supplemented by an element of $\mathcal{L}$.
\end{Coro}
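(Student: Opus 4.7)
The plan is to derive this corollary by composing the two bijective correspondences already established in Theorem~\ref{Thm6.4} and Lemma~\ref{Lemma6.6}. Theorem~\ref{Thm6.4} provides, for a fixed Hom-Lie algebra $(\mathcal{L},[-,-],\alpha)$, a one-to-one correspondence between Hom-Lie affgebras $\mathcal{A}$ with a distinguished base point $o$ satisfying $\alpha(o)=o$ and $T_{o}\mathcal{A}\cong\mathcal{L}$, and quadruples $(\kappa,\lambda,r)$ consisting of linear endomorphisms $\kappa,\lambda$ on $\mathcal{L}$ that commute with $\alpha$ and satisfy \eqref{eq17}, together with a chosen element $r\in\mathcal{L}$. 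Lemma~\ref{Lemma6.6} then identifies such pairs $(\kappa,\lambda)$ with generalized derivations of the special form $(\delta,\lambda,\lambda)\in\Delta(\mathcal{L})$ via the substitution $\delta:=\lambda-\kappa$ (equivalently $\kappa=\lambda-\delta$). This substitution is evidently a bijection on pairs of linear endomorphisms, and the lemma guarantees that the defining equation \eqref{eq17} together with the $\alpha$-commutation relations translates precisely into the definition of $(\delta,\lambda,\lambda)\in\Delta(\mathcal{L})$.

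The forward direction proceeds as follows. Given a Hom-Lie affgebra $(\mathcal{A},\{-,-\},\alpha)$ with fibre $\mathcal{L}\cong T_{o}\mathcal{A}$ at an $\alpha$-fixed point $o$, the converse part of Theorem~\ref{Thm6.4} extracts the canonical data
\[
r=\{o,o\},\qquad
\lambda(a)=\{o,a\}-\{o,o\},\qquad
\kappa(a)=\{a,a\}-\{o,o\},
\]
which satisfy \eqref{eq17}. Setting $\delta:=\lambda-\kappa$, Lemma~\ref{Lemma6.6} promotes this pair into a generalized derivation $(\delta,\lambda,\lambda)\in\Delta(\mathcal{L})$, and we attach the constant $r\in\mathcal{L}$.

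Conversely, starting from a generalized derivation $(\delta,\lambda,\lambda)\in\Delta(\mathcal{L})$ and an element $r\in\mathcal{L}$, Lemma~\ref{Lemma6.6} defines $\kappa:=\lambda-\delta$, and the pair $(\kappa,\lambda)$ satisfies \eqref{eq17} together with the $\alpha$-commutation conditions. Feeding this data into Theorem~\ref{Thm6.4} produces the Hom-Lie affgebra $\mathcal{A}(\mathcal{L};\alpha,\lambda-\delta,\lambda,r)$, whose fibre at any $\alpha$-fixed base point is isomorphic to $\mathcal{L}$.

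Finally, I would verify that these two assignments are mutually inverse. The bijectivity of $\kappa\longleftrightarrow\delta=\lambda-\kappa$ is immediate, and the inverse assertions in Theorem~\ref{Thm6.4} (that $\mathcal{A}=\mathcal{A}(T_{o}\mathcal{A},\alpha,\kappa,\lambda,r)$ on the one hand, and that applying the extraction procedure to $\mathcal{A}(\mathcal{L};\alpha,\kappa,\lambda,r)$ recovers the original $(\kappa,\lambda,r)$ on the other) transport directly through Lemma~\ref{Lemma6.6}. Since the substantive calculations have already been carried out in Theorem~\ref{Thm6.4} and Lemma~\ref{Lemma6.6}, no genuine obstacle remains; the only thing requiring care is a clean bookkeeping of which parameter corresponds to which in the composed bijection, so that the pair \emph{(generalized derivation of the form $(\delta,\lambda,\lambda)$, element of $\mathcal{L}$)} is seen to parametrize Hom-Lie affgebras with Hom-Lie algebra fibre $\mathcal{L}$ exactly once.
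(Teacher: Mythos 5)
Your proposal is correct and follows exactly the route the paper intends: the corollary is stated there as an immediate restatement of Theorem~\ref{Thm6.4} obtained by passing through the bijection $\kappa \leftrightarrow \delta = \lambda - \kappa$ of Lemma~\ref{Lemma6.6}, which is precisely the composition of correspondences you describe. (Only a cosmetic slip: you call $(\kappa,\lambda,r)$ a ``quadruple'' where it is a triple.)
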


Now we define to the relationship of homomorphisms between Hom-Lie affgebras and Hom-Lie algebras for further study of Hom-Lie affgebra and algebra.

\begin{Thm}\label{Thm6.8}
A function 
$\varphi : \mathcal{A}(\mathcal{L}; \alpha, \kappa, \lambda, r) \longrightarrow \mathcal{A}(\mathcal{L}'; \alpha', \kappa', \lambda', r')$
is a homomorphism of Hom-Lie affgebras if and only if there exist a Hom-Lie algebra homomorphism 
$
\psi : \mathcal{L} \longrightarrow \mathcal{L}'$
and an element $q' \in \mathcal{L}'$ such that
\begin{subequations}\label{15abcd}
\begin{align}
q' &= \alpha'(q'), \label{15a}\\
\psi \kappa &= \kappa' \psi, \label{15b}\\
\psi \lambda &= (\mathrm{ad}_{q'} + \lambda') \psi, \label{15c}\\
\psi(r) &= r' - q' + \kappa'(q'). \label{15d}
\end{align}
\end{subequations}
\end{Thm}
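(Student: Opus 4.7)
The plan is to unfold the condition that $\varphi$ is a Hom-Lie affgebra morphism in terms of the explicit bracket formula \eqref{eq18} from Theorem~\ref{Thm6.4}, and then to decompose the resulting identity by polynomial degree in each argument. First I exploit the fact that the affine space underlying $\mathcal{A}(\mathcal{L};\alpha,\kappa,\lambda,r)$ is (by construction in Theorem~\ref{Thm6.4}) the standard affine space of the vector space $\mathcal{L}$, with $\langle a,b,c\rangle=a-b+c$. By the affine--linear correspondence recalled in Remark~\ref{Rem1}, any affine map $\varphi:\mathcal{L}\to\mathcal{L}'$ decomposes uniquely as $\varphi(a)=\psi(a)+q'$, where $\psi:\mathcal{L}\to\mathcal{L}'$ is linear and $q':=\varphi(0)\in\mathcal{L}'$. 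Substituting into $\varphi\circ\alpha=\alpha'\circ\varphi$ and evaluating at $a=0$ yields $q'=\alpha'(q')$, which is \eqref{15a}; cancelling that contribution leaves $\psi\alpha=\alpha'\psi$, the Hom-intertwining property for $\psi$.

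Next I expand both sides of $\varphi(\{a,b\})=\{\varphi(a),\varphi(b)\}'$ using \eqref{eq18}. The left-hand side becomes $\psi([a,b])+\psi\kappa(a)+\psi\lambda(b)-\psi\lambda(a)+\psi(r)+q'$. On the right, substituting $\varphi=\psi+q'$, expanding the bilinear bracket and the linear maps $\kappa',\lambda'$, and using $[q',q']'=0$, one obtains $[\psi(a),\psi(b)]'+[\psi(a),q']'+[q',\psi(b)]'+\kappa'\psi(a)+\kappa'(q')+\lambda'\psi(b)-\lambda'\psi(a)+r'$. Both expressions are polynomial functions of $(a,b)$ of total degree at most two, with constant, linear-in-$a$, linear-in-$b$, and bilinear components; equating the two sides forces equality on each component separately.

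Matching components yields the four conditions. The constant part ($a=b=0$) gives $\psi(r)+q'=\kappa'(q')+r'$, which is \eqref{15d}. The bilinear part gives $\psi([a,b])=[\psi(a),\psi(b)]'$, so $\psi$ preserves the Lie bracket and, combined with $\psi\alpha=\alpha'\psi$, is a Hom-Lie algebra homomorphism. Setting $a=0$ and subtracting the constant part isolates the linear-in-$b$ component: $\psi\lambda(b)=[q',\psi(b)]'+\lambda'\psi(b)=(\mathrm{ad}_{q'}+\lambda')\psi(b)$, which is \eqref{15c}. Finally, setting $b=0$ and subtracting the constant part leaves $\psi\kappa(a)-\psi\lambda(a)=[\psi(a),q']'+\kappa'\psi(a)-\lambda'\psi(a)$; substituting \eqref{15c} on the left and using the antisymmetry $[q',\psi(a)]'=-[\psi(a),q']'$ collapses this to $\psi\kappa=\kappa'\psi$, which is \eqref{15b}.

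The converse is obtained by running the same decomposition in reverse: given a Hom-Lie homomorphism $\psi$ and $q'\in\mathcal{L}'$ satisfying \eqref{15a}--\eqref{15d}, the map $\varphi(a):=\psi(a)+q'$ is affine by construction, commutes with $\alpha$ by \eqref{15a} together with $\psi\alpha=\alpha'\psi$, and reassembling the four components reproduces $\varphi(\{a,b\})=\{\varphi(a),\varphi(b)\}'$. The main obstacle is bookkeeping rather than anything conceptual: the expanded right-hand side of the bracket identity contains nine summands after distributive expansion, and one has to partition them correctly among the four degree components. The genuinely delicate step is the linear-in-$a$ calculation, where the antisymmetry of $[-,-]'$ is essential for cancelling the $\psi\lambda$ and $\lambda'\psi$ contributions against $[q',\psi(a)]'$ to recover \eqref{15b} exactly.
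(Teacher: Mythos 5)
Your proposal is correct and follows essentially the same route as the paper: decompose the affine map as $\varphi=\psi+q'$ via the affine--linear correspondence of Remark~\ref{Rem1}, extract \eqref{15a} and $\psi\alpha=\alpha'\psi$ from the $\alpha$-compatibility, and obtain \eqref{15b}--\eqref{15d} together with $\psi[a,b]=[\psi(a),\psi(b)]'$ by expanding $\varphi(\{a,b\})=\{\varphi(a),\varphi(b)\}'$ with the bracket formula \eqref{eq18} and matching the constant, linear, and bilinear components. In fact your write-up is more complete than the paper's, which delegates that expansion to the cited non-Hom analogue in \cite{And2}; your degree-by-degree bookkeeping, including the antisymmetry cancellation needed to isolate $\psi\kappa=\kappa'\psi$, is exactly the computation being invoked there.
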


\begin{proof}
From the Remark (\ref{Rem1}), a function $\varphi:\mathcal{A}(\mathcal{L};\alpha,\kappa,\lambda,r)\longrightarrow\mathcal{A}(\mathcal{L}';\alpha',\kappa',\lambda',r')$ is a homomorphism of affine spaces if and only if the function $\psi:\mathcal{L}\rightarrow\mathcal{L}'$ define as $\psi(a)=\varphi(a)-\varphi(o)$ is a linear transformation. Set $q'=\varphi(o)$.

The Hom-Lie affgebra homomorphism condition can be expressed explicitly as, for all $a,b\in \mathcal{L}$,
\begin{subequations}
\begin{align}
(\varphi\alpha)(a)&=(\alpha'\varphi)(a),\label{16a}\\
\varphi(\{a,b\})&=\{\varphi(a),\varphi(b)\}\label{16b}.
\end{align}
\end{subequations}
From the equation (\ref{16a}), reduce to 
$$
\psi(\alpha(a))+\varphi(o)=\alpha'(\psi(a)+\varphi(o)).
$$
Finally, setting $a=o$ and putting $q'=\varphi(o)$ yield the equation (\ref{15a}) as well as the fact $\psi$ satisfy the condition $\psi\alpha=\alpha'\psi$.

Similar to the prove of the Theorem (3.4) of the Paper \textbf{ \cite{And2}}, from the equation (\ref{16b}), we can prove the equation (\ref{15b}), (\ref{15c}), and (\ref{15d}) as well as the fact $\psi$ satisfy the condition $\psi[a,b]=[\psi(a),\psi(b)]$.

The converse is straightforward by simple calculation.
\end{proof}

\begin{Coro}\label{Coro6.9}
Let $\mathcal{A}=\mathcal{A}(\mathcal{L};\alpha,\kappa,\lambda,r)$ and  $\mathcal{A}'=\mathcal{A}(\mathcal{L}';\alpha',\kappa',\lambda',r')$ be two Hom-Lie affgebras with Hom-Lie algebras $\mathcal{L}$ and $\mathcal{L}'$ respectively. Then $\mathcal{A}$ is isomorphic to $\mathcal{A}'$ if and only if there exist a Hom-Lie algebra isomorphism $\Psi:\mathcal{L}\longrightarrow\mathcal{L}'$ and $q'\in \mathcal{L}'$, such that 
\begin{subequations}\label{17abcd}
\begin{align}
q'&=\alpha'(\Psi(q)),\label{17a}\\
\kappa' &=\Psi \kappa \Psi^{-1}, \label{17b}\\
\lambda' &= \Psi(\lambda-\mathrm{ad}_q)\Psi^{-1}, \label{17c}\\
r' &=\Psi( r + q-\kappa(q)). \label{17d}
\end{align}
\end{subequations}
\end{Coro}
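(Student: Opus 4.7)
The plan is to derive this corollary directly from Theorem~\ref{Thm6.8}, using the fact that an isomorphism of Hom-Lie affgebras is precisely a homomorphism whose underlying map is bijective. I will show this is equivalent to the associated Hom-Lie algebra homomorphism $\psi$ from Theorem~\ref{Thm6.8} being itself a Hom-Lie algebra isomorphism, and then translate conditions \eqref{15a}--\eqref{15d} into the invertible form \eqref{17a}--\eqref{17d}.

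In the forward direction, I would begin with an affgebra isomorphism $\varphi:\mathcal{A}\to\mathcal{A}'$ and apply Theorem~\ref{Thm6.8} to produce $\psi:\mathcal{L}\to\mathcal{L}'$ together with $q':=\varphi(o)\in\mathcal{L}'$ satisfying \eqref{15a}--\eqref{15d}, where $\psi(a)=\varphi(a)-q'$. Because this formula differs from $\varphi$ only by the translation $b\mapsto b+q'$, which is a bijection of $\mathcal{L}'$, $\psi$ inherits bijectivity from $\varphi$ and is therefore a Hom-Lie algebra isomorphism. Setting $\Psi:=\psi$ and $q:=\Psi^{-1}(q')$, so that $q'=\Psi(q)$, the key algebraic identity
\[
\Psi\circ\mathrm{ad}_{q}\circ\Psi^{-1}=\mathrm{ad}_{\Psi(q)}=\mathrm{ad}_{q'},
\]
which holds because $\Psi$ is a Lie algebra homomorphism, would let me conjugate \eqref{15a}--\eqref{15d} through by $\Psi$. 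Explicitly, \eqref{15b} becomes \eqref{17b} immediately; \eqref{15c} becomes $\lambda'=\Psi\lambda\Psi^{-1}-\mathrm{ad}_{q'}=\Psi(\lambda-\mathrm{ad}_{q})\Psi^{-1}$, i.e.\ \eqref{17c}; \eqref{15d} rewrites, using $\Psi\kappa(q)=\kappa'(q')$, as $r'=\Psi(r)+q'-\kappa'(q')=\Psi(r+q-\kappa(q))$, i.e.\ \eqref{17d}; and \eqref{15a} $q'=\alpha'(q')$ becomes \eqref{17a} after substituting $q'=\Psi(q)$.

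For the converse, given an isomorphism $\Psi$ together with $q'$ satisfying \eqref{17a}--\eqref{17d}, I would set $\psi:=\Psi$ and reverse the above manipulations to recover \eqref{15a}--\eqref{15d}, then invoke Theorem~\ref{Thm6.8} to produce a Hom-Lie affgebra homomorphism $\varphi(a)=\Psi(a)+q'$, which is bijective because $\Psi$ is. The main technical point throughout is the identity $\Psi\,\mathrm{ad}_{q}\,\Psi^{-1}=\mathrm{ad}_{q'}$; once it is in hand, the back-and-forth translation between \eqref{15a}--\eqref{15d} and \eqref{17a}--\eqref{17d} is essentially just conjugation by $\Psi$. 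The only delicate bookkeeping is keeping the identification $q'=\Psi(q)$ compatible with the fixed-point hypothesis on $q'$ under $\alpha'$, which is precisely what the form of \eqref{17a} encodes via $\Psi\alpha=\alpha'\Psi$.
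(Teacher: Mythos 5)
Your proposal is correct and follows essentially the same route as the paper: reduce to Theorem~\ref{Thm6.8}, observe that $\varphi$ is bijective iff its linear part $\Psi=\varphi-\varphi(o)$ is, and conjugate \eqref{15abcd} into \eqref{17abcd} via the identity $\Psi\,\mathrm{ad}_{q}\,\Psi^{-1}=\mathrm{ad}_{q'}$ with $q=\Psi^{-1}(q')$. Your translation of each of \eqref{15a}--\eqref{15d} is accurate, and if anything spells out the bookkeeping (e.g.\ $\kappa'(q')=\Psi\kappa(q)$) more explicitly than the paper does.
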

\begin{proof}
This result follows directly from Theorem~\ref{Thm6.8}, since the affine map 
$\Phi:\mathcal{A}\rightarrow\mathcal{A}'$ is an isomorphism if and only if its linear part 
$\Psi = \Phi - \Phi(o) : \mathcal{L} \rightarrow \mathcal{L}'$ 
is an isomorphism of vector spaces. 

By rearranging equations~\eqref{15abcd}, let $q = \Psi^{-1}(q')$, where $q' = \Phi(o)$. 
Using the fact that 
\[
\Psi\,\mathrm{ad}_{q}\,\Psi^{-1}(a)
= \Psi[q, \Psi^{-1}(a)]
= \Psi[\Psi^{-1}(q'), \Psi^{-1}(a)]
= [q', a]
= \mathrm{ad}_{q'}(a),
\]
we obtain the conditions~\eqref{17abcd}.

\end{proof}
As a direct consequence of Corollary~\ref{Coro6.9}, we obtain the following criterion for the non-isomorphism of Hom-Lie affgebras.

\begin{Coro}
If two Hom-Lie affgebras have non-isomorphic Hom-Lie algebra fibres, then they are not isomorphic.
\end{Coro}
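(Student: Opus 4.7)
The plan is to establish this statement by contrapositive, reducing it directly to Corollary~\ref{Coro6.9}. First I would assume that the two Hom-Lie affgebras $\mathcal{A}$ and $\mathcal{A}'$ are isomorphic via some affine map $\Phi : \mathcal{A} \to \mathcal{A}'$ intertwining the brackets and the Hom-maps, and fix an element $o \in \mathcal{A}$ with $\alpha(o) = o$. Setting $o' := \Phi(o)$, the homomorphism relation $\Phi \circ \alpha = \alpha' \circ \Phi$ automatically forces $\alpha'(o') = o'$, so $o'$ is an $\alpha'$-fixed point of $\mathcal{A}'$ and the fibre $T_{o'}\mathcal{A}'$ is available.

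Next I would apply Theorem~\ref{Thm6.4} to both affgebras, writing them in the canonical form $\mathcal{A} = \mathcal{A}(T_o\mathcal{A}; \alpha, \kappa, \lambda, r)$ and $\mathcal{A}' = \mathcal{A}(T_{o'}\mathcal{A}'; \alpha', \kappa', \lambda', r')$, with fibres $T_o\mathcal{A}$ and $T_{o'}\mathcal{A}'$ endowed with the Hom-Lie algebra structures from Theorem~\ref{Thm10}. Corollary~\ref{Coro6.9} then applies verbatim and produces a Hom-Lie algebra isomorphism $\Psi : T_o\mathcal{A} \to T_{o'}\mathcal{A}'$ between the fibres, which directly contradicts the hypothesis that the fibres are non-isomorphic. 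The concrete candidate for $\Psi$ is the linear part $\Psi(a) = \Phi(a) - \Phi(o)$ of $\Phi$, as already exhibited inside the proof of Corollary~\ref{Coro6.9}.

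The one subtlety I would flag at the outset is that the phrase \emph{the Hom-Lie algebra fibre} must be meaningful independently of a choice of base point; this is ensured by the last assertion of Theorem~\ref{Thm10}, which guarantees that fibres $T_o\mathcal{A}$ and $T_e\mathcal{A}$ of the same affgebra at different $\alpha$-fixed points are mutually isomorphic as Hom-Lie algebras. Beyond this bookkeeping, I do not anticipate any real obstacle: the corollary is essentially the contrapositive reformulation of Corollary~\ref{Coro6.9}, and the entire argument fits on a few lines.
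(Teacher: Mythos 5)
Your argument is correct and is essentially the paper's own: the corollary is obtained as the contrapositive of Corollary~\ref{Coro6.9} (an isomorphism of Hom-Lie affgebras induces, via its linear part, an isomorphism of the fibre Hom-Lie algebras). Your additional observations --- that $\alpha'(\Phi(o)) = \Phi(o)$ follows from the intertwining relation, and that base-point independence of the fibre is guaranteed by Theorem~\ref{Thm10} --- are correct and make explicit what the paper leaves implicit.
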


\begin{center}
 {\bf ACKNOWLEDGEMENT}
 \end{center}
 
This research was supported by the Core Research Grant (CRG) of the Anusandhan National Research Foundation (ANRF), formerly the Science and Engineering Research Board (SERB), under the Department of Science and Technology (DST), Government of India (Grant No.~CRG/2022/005332). The authors gratefully acknowledge the financial support received from the aforementioned agency.

\end{document}